\documentclass[a4paper,12pt]{amsart}
\usepackage[abbrev]{amsrefs}
\usepackage{amscd}
\usepackage{amssymb}
\usepackage{mathtools}

\theoremstyle{plain}
  \newtheorem{thm}{Theorem}[section]
  \newtheorem{lem}[thm]{Lemma}
  
  \newtheorem{cor}[thm]{Corollary}
  \newtheorem{prop}[thm]{Proposition}
  \newtheorem{claim}[thm]{Claim}

\theoremstyle{definition}
  \newtheorem{dfn}[thm]{Definition}
  
  \newtheorem{ex}[thm]{Example}

\theoremstyle{remark}
  \newtheorem{rmk}[thm]{Remark}

  \newtheorem*{ack}{Acknowledgment}

\numberwithin{equation}{section}

\makeindex

\DeclareMathOperator{\supp}{supp}

\DeclareMathOperator{\esssup}{ess\sup} 
\DeclareMathOperator{\essinf}{ess\inf}

\newcommand{\dis}{\operatorname{dis}}

\newcommand{\diam}{\operatorname{diam}}

\newcommand{\ldimH}{\underline{\mathrm{dim}}_{\mathrm{H}}}

\newcommand{\udimH}{\overline{\mathrm{dim}}_{\mathrm{H}}} 
\newcommand{\essldimH}{\underline{\mathrm{dim}}_{\mathrm{H}}^{\mathrm{ess}}} 
\newcommand{\essudimH}{\overline{\mathrm{dim}}_{\mathrm{H}}^{\mathrm{ess}}}

\usepackage[%
 setpagesize=false,%
 bookmarks=true,%
 bookmarksdepth=tocdepth,%
 bookmarksnumbered=true,
 colorlinks=true,%
 pdftitle={},%
 pdfsubject={},%
 pdfauthor={},%
 pdfkeywords={}%
]{hyperref}

\begin{document}

\title
{The semicontinuity of dimensions of measures}

\begin{abstract}
  In this paper, we consider certain uniformity conditions on the radii of balls.
  Under these conditions, we study the semicontinuity of the Hausdorff dimension of measures with respect to measured Gromov--Hausdorff convergence and box convergence.  
  We also examine the semicontinuity of other dimensions of measures, which are similar to the Hausdorff dimension of measures. 

\end{abstract}

\author{Daiki Takagi}
\address{Mathematical Institute, Tohoku University, Sendai 980-8578, Japan}
\email{takagi.daiki.t1@dc.tohoku.ac.jp}

\date{\today}

\keywords{Hausdorff dimension of measures, measured Gromov--Hausdorff convergence, box convergence.}
\subjclass[2020]{53C23, 28A80}

\maketitle

\setcounter{tocdepth}{3}

\tableofcontents

\section{Introduction}
\label{sec:intro}

We are interested in notions of dimension for metric measure spaces, especially the relation between the dimensions of a convergent sequence of metric measure spaces and the dimension of the limit. 
A typical example of a dimension for metric measure spaces is the Hausdorff dimension of measures. 
It is defined via the local dimension, which is the growth rate of the measure of balls centered at each point. 
A definition of it can be found in \cite{falconer1997techniques}. 
In this paper, in addition to $\essudimH$ and $\essldimH$ (see \cite{falconer1997techniques}), we define two new notions of dimension of measures, $\udimH$ and $\ldimH$, as follows: 

\begin{dfn}
	Let $(X, d_X)$ be a metric space and $\mu_X$ be a Borel probability measure on $X$.  
	We define 
	\begin{align*}
		\udimH (\mu_X) & \coloneqq \sup_{x \in X} \liminf_{r \to 0}  \frac{\log \mu_X (B_r (x))}{\log r}, 
		\\
		\ldimH (\mu_X) & \coloneqq \inf_{x \in X} \liminf_{r \to 0} \frac{\log \mu_X (B_r (x))}{\log r},
		\\
		\essudimH (\mu_X) & \coloneqq \esssup \displaylimits_{x \in X} \liminf_{r \to 0}  \frac{\log \mu_X (B_r (x))}{\log r}, 
		\\
		\essldimH (\mu_X) & \coloneqq \essinf \displaylimits_{x \in X} \liminf_{r \to 0}  \frac{\log \mu_X (B_r (x))}{\log r}, 
	\end{align*}
	where $B_r (x) \coloneqq \{ z \in X \mid d_X (z, x) \leq r \}$. 
\end{dfn}
In particular, in \cite{falconer1997techniques},  $\essldimH$ and $\essudimH$ are called the Hausdorff dimension of measures and the upper Hausdorff dimension of measures, respectively.  

	Metric measure geometry originated from the theory of convergence and collapse of Riemannian manifolds.
	In general, the limit space of a sequence of Riemannian manifolds is not necessarily a manifold. 
	It is a metric space or a metric measure space depending on the topology that is considered.  
	Here, a triple $(X, d_X, \mu_X)$ is a \textit{metric measure space} or an \textit{mm-space} for short if $(X, d_X)$ is a complete separable metric space and $\mu_X$ is a Borel probability measure on $X$.  
	We write $X$ for it when no confusion can arise. 
	In this paper, we treat measured Gromov--Hausdorff convergence and box convergence as notions of convergence for Riemannian manifolds or, more generally, for metric measure spaces. 
	Measured Gromov--Hausdorff convergence was introduced in \cite{Fukaya_1987}, and box convergence in \cite{Gromov_greenbook}. 
	We note that both notions of convergence are metrizable. 
	The following proposition holds. 
	
	\begin{prop}[{\cite [Remark 4.34]{Shioya_2016}}] \label{prop: mgh and box}
		If a sequence of compact mm-spaces $\{ X_n \}_{n \in \mathbb{N}}$ measured Gromov--Hausdorff converges to a compact mm-space $X$, then $\{ X_n \}_{n \in \mathbb{N}}$ box converges to $X$. 
	\end{prop}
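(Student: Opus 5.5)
Our plan is to pass through the intermediate notion of weak (Prokhorov) convergence of push-forward measures in a common ambient space, and then bound the box distance by the Prokhorov distance plus the Hausdorff-type error. We use Fukaya's definition of measured Gromov--Hausdorff convergence for compact spaces, in the following equivalent form. There exist a compact metric space $Z$ and isometric embeddings $\iota_n \colon X_n \to Z$, $\iota \colon X \to Z$ such that $\iota_n(X_n) \to \iota(X)$ in the Hausdorff sense and $(\iota_n)_* \mu_{X_n} \to \iota_* \mu_X$ weakly in $Z$. If the definition is given instead via $\varepsilon_n$-isometries $f_n \colon X_n \to X$ with $(f_n)_*\mu_{X_n} \to \mu_X$ weakly, we first glue $X_n$ and $X$ along the graph of $f_n$ to get such embeddings into a common space $Z_n$ (the disjoint union with the usual gluing metric $d(x,y)=\inf_{x'}\{d_{X_n}(x,x')+\varepsilon_n+d_X(f_n(x'),y)\}$). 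Then Hausdorff distance is at most about $\varepsilon_n$, and the pushed-forward measures on $Z_n$ are Prokhorov-close up to $\varepsilon_n$ plus $d_P((f_n)_*\mu_{X_n},\mu_X)$.

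The key step is the standard inequality from Shioya's book: for mm-spaces embedded in a common metric space $Z$,
\[
\square(X_n, X) \le 2\, d_P\bigl((\iota_n)_*\mu_{X_n}, \iota_*\mu_X\bigr),
\]
where $d_P$ is the Prokhorov metric on $Z$. This follows from two facts.
\begin{itemize}
\item By Strassen's theorem, Prokhorov distance $<\delta$ yields a coupling $\pi$ with $\pi(\{d_Z(z,w)>\delta\})\le\delta$.
\item From a coupling with small exceptional set one builds parametrizations $\varphi_n,\varphi\colon[0,1)\to X_n, X$ pushing Lebesgue measure to the measures. These satisfy $|d_{X_n}(\varphi_n(s),\varphi_n(t)) - d_X(\varphi(s),\varphi(t))| \le 2\delta$ off a set of measure $\le\delta$.
\end{itemize}
To build the parametrizations, we parametrize the coupling $\pi$ on $Z\times Z$ by Lebesgue measure on $[0,1)$, which is possible since $Z\times Z$ is a standard Borel space. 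We then compose with the two coordinate projections and the inverses of the isometric embeddings. The triangle inequality in $Z$ then gives the distortion bound on the good set. This shows $\square(X_n,X)\le 2\delta$ in the box-distance definition, since we only need a measure $\le 2\delta$ set where distortion exceeds $2\delta$.

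Since weak convergence on the compact (hence separable) $Z$ is metrized by $d_P$, we get $\square(X_n,X)\to 0$. The main obstacle is the first step, if the paper's definition of mGH convergence uses approximations rather than a common ambient space. Then we must check carefully that the glued metric is a genuine metric (pseudometric suffices after quotienting), and that Prokhorov distances transfer with only an $O(\varepsilon_n)$ loss. We expect this to be routine but bookkeeping-heavy. Note that the Hausdorff convergence itself is not needed for the box estimate; only measure closeness in the common space matters.
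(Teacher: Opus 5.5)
Your proof is correct, but it takes a different and longer route than the paper.

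\textbf{The paper's route.} The paper's argument is a one-liner. The Borel $\varepsilon_n$-isometry $f_n\colon X_n\to X$ from Definition \ref{dfn: mgh convergence} is itself a $\delta_n$-mm-isomorphism, where
\[
\delta_n \coloneqq \max\{\varepsilon_n,\ d_{\mathrm{P}}((f_n)_*\mu_{X_n},\mu_X)\},
\]
with non-exceptional domain $\tilde X_n = X_n$. Indeed, $X_n$ has full measure, the distortion is at most $\varepsilon_n$ on all of $X_n$, and the Prohorov condition holds by the choice of $\delta_n$. Part (2) of Proposition \ref{prop: box distance and epsilon-mm-ismom} then gives $\square(X_n,X)\le 3\delta_n$. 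Finally $\delta_n\to 0$, because weak convergence on the compact space $X$ is metrized by $d_{\mathrm{P}}$.

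\textbf{Your route.} You instead glue $X_n$ and $X$ into a compact space $Z_n$. Your additive constant $\varepsilon_n$ is at least half the distortion of the graph of $f_n$, so the glued function is a genuine metric. You then use $d_{Z_n}(x,f_n(x))\le\varepsilon_n$ to get
\[
d_{\mathrm{P}}^{Z_n}\le\varepsilon_n+d_{\mathrm{P}}((f_n)_*\mu_{X_n},\mu_X),
\]
and finally invoke $\square\le 2d_{\mathrm{P}}$ in the common space.

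\textbf{Remarks on your write-up.}
\begin{enumerate}
\item The last inequality is exactly Proposition \ref{prop: box and prokhorov} applied on $Z_n$, since $(Z_n,(\iota_n)_*\mu_{X_n})$ and $(Z_n,\iota_*\mu_X)$ are mm-isomorphic to $X_n$ and $X$. You need not re-derive it.
\item If you do re-derive it, the parametrization of the coupling is unnecessary with the coupling-based definition of $\square$ used in this paper. The Strassen coupling $\pi$, together with $S=\{(x,y)\mid d_{Z_n}(x,y)\le\delta\}$, already gives $1-\pi(S)\le\delta$ and $\dis(S)\le 2\delta$.
\item Since $Z_n$ depends on $n$, your closing sentence about weak convergence in a single $Z$ does not apply as written. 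Replace it by the explicit bound on $d_{\mathrm{P}}^{Z_n}$ above, which already tends to $0$.
\end{enumerate}

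\textbf{Comparison.} Your approach applies verbatim to Fukaya's common-ambient-space formulation of measured Gromov--Hausdorff convergence, and it makes the box--Prohorov mechanism explicit. The paper's approach uses the approximating maps directly, with no gluing and no appeal to Strassen's theorem. The comparable coupling estimate is packaged inside the cited Proposition \ref{prop: box distance and epsilon-mm-ismom}. Both routes correctly ignore the $\varepsilon_n$-density part of the definition of an $\varepsilon_n$-isometry, as you remark.
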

	
	This implies that all properties that hold for box convergent sequences of compact mm-spaces also hold for measured Gromov--Hausdorff convergent sequences.  
	
	Burago, Gromov, and Perelman proved in \cite{Burago_1992} that the Hausdorff dimension is lower semicontinuous with respect to the Gromov--Hausdorff distance on the set of all compact Alexandrov spaces satisfying some geometric conditions. 
	Also, in \cite{Ma:2019aa}, the semicontinuity of $\essudimH$ and $\essldimH$ was studied under setwise convergence, which is a notion of convergence for probability measures.  
	If we do not impose any conditions on a sequence of mm-spaces, they are not necessarily semicontinuous. 
	Therefore, in this paper, we consider the following two conditions, U($D$) and L($D$). 

\begin{dfn}
	Let $0 < D \leq 1$. 
	We say that an mm-space $X$ satisfies the condition U($D$)  (respectively,  the condition L($D$)) if the equality
	\begin{align*}
		\lim_{r \to 0} \frac{\log \mu_X (B_r (x))}{\log r} &= \sup_{0 < r < D} \frac{\log \mu_X (B_r (x))}{\log r} \\
		& \left( \text{resp.} =  \inf_{0 < r < D} \frac{\log \mu_X (B_r (x))}{\log r}\right)
	\end{align*}
	holds for all $x \in X$.
\end{dfn}

	We write $\mathcal{X}_{\mathrm{U}(D)}$ and $\mathcal{X}_{\mathrm{L}(D)}$ for the sets of all mm-spaces satisfying the conditions U($D$) and L($D$), respectively. 
	We observe that $\mathcal{X}_{\mathrm{U}(D)}$ and $\mathcal{X}_{\mathrm{L}(D)}$ are not necessarily closed sets with respect to the box distance.  
	For further details, see Examples \ref{ex: not closed under UD} and \ref{ex: not closed under LD}. 

	Under either the condition U($D$) or the condition L($D$), we study the semicontinuity of  $\udimH$, $\ldimH$, $\essudimH$, and $\essldimH$ with respect to measured Gromov--Hausdorff convergence and box convergence and obtain the following results. 
	In what follows, fix $0 < D \leq 1$. 
 
 \begin{thm} \label{thm: lowersemi, box}
 	Let $\{X_n \}_{n \in \mathbb{N}}$ be a sequence of mm-spaces satisfying the condition U($D$). 
	If $\{X_n \}_{n \in \mathbb{N}}$ box converges to an mm-space $X$, we have
	\begin{align}
		\label{thm: eq: U, udim, box, lower}
		\udimH (\mu_X) &\leq \liminf_{n \to \infty} \udimH (\mu_{X_n}), \\
		\label{eq: thm: U, ldimH, box, lower}
		\ldimH (\mu_X) & \leq \liminf_{n \to \infty} \ldimH (\mu_{X_n}), 
		\\
		\label{thm: eq: U, essudim, box, lower}
		\essudimH (\mu_X)& \leq \liminf_{n \to \infty} \essudimH (\mu_{X_n}). 
	\end{align}
 \end{thm}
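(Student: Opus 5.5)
The plan is to use condition U($D$) to turn information about the local dimension at a point of $X_n$ into a uniform lower bound on the measure of balls. Then I transfer that bound to $X$ through the parametrizations that define box convergence.

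\emph{Uniform lower bound from U($D$).} Write $d_n(y)$ for the local dimension of $\mu_{X_n}$ at $y$. Under U($D$), $d_n(y)=\sup_{0<r<D}\log\mu_{X_n}(B_r(y))/\log r$. Since $\log r<0$, this gives $\mu_{X_n}(B_r(y))\ge r^{d_n(y)}$ for every $r\in(0,D)$. Because $r<1$, any $t\ge d_n(y)$ also gives $\mu_{X_n}(B_r(y))\ge r^{t}$.

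\emph{Transfer lemma.} By box convergence there are parametrizations $\varphi_n\colon[0,1)\to X_n$ and $\varphi\colon[0,1)\to X$, together with Borel sets $I_n\subset[0,1)$ with $\mathcal{L}^1(I_n)\ge 1-\varepsilon_n$ and $\varepsilon_n\to0$, such that $|d_{X_n}(\varphi_n(s),\varphi_n(t))-d_X(\varphi(s),\varphi(t))|\le\varepsilon_n$ for $s,t\in I_n$. Then for $t\in I_n$ and $y=\varphi_n(t)$,
\[
\mu_X\bigl(B_{r+\varepsilon_n}(\varphi(t))\bigr)\ \ge\ \mathcal{L}^1\bigl(\varphi_n^{-1}(B_r(y))\cap I_n\bigr)\ \ge\ \mu_{X_n}(B_r(y))-\varepsilon_n .
\]

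\emph{Proof of \eqref{thm: eq: U, udim, box, lower}.} Pass to a subsequence with $\udimH(\mu_{X_n})\to s:=\liminf_n\udimH(\mu_{X_n})$, and assume $s<\infty$. Fix $x\in X$; I take $X=\supp\mu_X$, as usual in the box setting, since otherwise the local dimension at $x$ is infinite. For $\delta>0$ the set $A=\varphi^{-1}(B_\delta(x))$ has positive measure, so $A\cap I_n\neq\emptyset$ for large $n$. Pick $t$ in it. The transfer lemma and the U($D$) bound with $d_n(y)\le\udimH(\mu_{X_n})$ give
\[
\mu_X\bigl(B_{r+\delta+\varepsilon_n}(x)\bigr)\ \ge\ r^{\udimH(\mu_{X_n})}-\varepsilon_n .
\]
Letting $n\to\infty$ and then $\delta\to0$ (closed balls are intersections of the larger ones) yields $\mu_X(B_r(x))\ge r^{s}$ for all $r<D$. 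Hence the local dimension at $x$ is at most $s$, and taking the supremum over $x$ gives the claim.

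\emph{Proof of \eqref{thm: eq: U, essudim, box, lower}.} The argument is the same, except that $t$ is chosen in $A\cap I_n\cap\{t: d_n(\varphi_n(t))\le\essudimH(\mu_{X_n})\}$. The last set has full measure because $\varphi_n$ pushes $\mathcal{L}^1$ to $\mu_{X_n}$. This gives local dimension at most $\liminf_n\essudimH(\mu_{X_n})$ at every $x\in\supp\mu_X$, hence $\mu_X$-a.e.

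\emph{Proof of \eqref{eq: thm: U, ldimH, box, lower}.} This is the main obstacle, because the near-minimizing points $y_n\in X_n$ with $d_n(y_n)\le t:=\liminf_n\ldimH(\mu_{X_n})+\eta$ (along a subsequence) need not correspond to any fixed point of $X$. The U($D$) bound, however, says these $y_n$ carry mass at all scales.

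Fix $r_K\to0$. For each $K$, pick $n$ large with $\varepsilon_n\ll r_K^{t}$ and a point $t_n\in\varphi_n^{-1}(B_{r_K}(y_n))\cap I_n$; this set is nonempty since its measure is at least $r_K^t-\varepsilon_n>0$. Put $z_K=\varphi(t_n)$. By the triangle inequality and the transfer lemma, $\mu_X(B_{3r}(z_K))\ge r^{t}/2$ for all $r\in[r_K,D)$.

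Next I show that $(z_K)$ is totally bounded. For fixed $r$, take a compact $C_r\subset X$ with $\mu_X(X\setminus C_r)<r^t/2$. Then $B_{3r}(z_K)$ meets $C_r$, so every $z_K$ with $r_K\le r$ lies within $3r$ of $C_r$, which is covered by finitely many $r$-balls. Completeness of $X$ then gives a subsequence $z_K\to x$. Passing to the limit, $\mu_X(B_{4r}(x))\ge r^t/2$ for all $r<D$, so the local dimension at $x$ is at most $t$. Letting $\eta\to0$ finishes the proof.
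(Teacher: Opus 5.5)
Your proof is correct. Its skeleton is the same as the paper's. Condition U($D$) becomes the scale-uniform bound $\mu_{X_n}(B_r(y))\ge r^{d_n(y)}$ for $r<D$, this bound is transported to $X$ with an $O(\varepsilon_n)$ loss, and for \eqref{eq: thm: U, ldimH, box, lower} a compactness argument supplies a limit point in $X$.

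The technical route differs, though. The paper uses two separate tools. For \eqref{thm: eq: U, udim, box, lower} and \eqref{thm: eq: U, essudim, box, lower} it embeds everything into a common space $Z$ (Lemma \ref{prop: box converge and embedding}), compares balls through the Prohorov distance, and perturbs the approximating point into the full-measure set $X_n\setminus N_n$. For \eqref{eq: thm: U, ldimH, box, lower} it switches to $\varepsilon_n$-mm-isomorphisms $f_n$. It then needs Claim \ref{claim: U, essldimH, lower-1}, with the auxiliary radius $\beta_n=\varepsilon_n^{1/(\alpha_n+\varepsilon_n+1)}$, to force the near-minimizer close to the non-exceptional domain. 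Finally it extracts a convergent subsequence of $f_n(z_n)$ by packing: a $\delta$-separated sequence would give infinitely many disjoint balls of uniformly positive mass.

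You instead work throughout with the parameter description of $\square$ and a single transfer inequality, which handles all three inequalities uniformly. Your ``good point'' step reduces to one observation: $\varphi_n^{-1}(B_{r_K}(y_n))$ has measure at least $r_K^t>\varepsilon_n$, so it meets $I_n$. This is cleaner than the $\beta_n$ device. You also get total boundedness from tightness of $\mu_X$ rather than from packing.

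The price is what you rely on: the equivalence between the parameter definition and the transport-plan definition of $\square$ that the paper uses, the existence of parameters, and Ulam's tightness theorem. The paper's packing argument needs only $\mu_X(X)<\infty$ and completeness.

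Two small points should be made explicit. First, the parameter of $X$ will in general depend on $n$; your argument never uses its independence, so nothing breaks. Second, in the $\ldimH$ step you should require $\varepsilon_n\le\min\{r_K,\,r_K^t/2\}$, not just $\varepsilon_n\ll r_K^t$. Otherwise, when $t<1$, the inclusion $B_{2r+\varepsilon_n}(z_K)\subset B_{3r}(z_K)$ for $r\ge r_K$ can fail.
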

 
Proposition \ref{prop: mgh and box} and Theorem \ref{thm: lowersemi, box} imply the following. 

 \begin{cor}\label{cor: lowersemi, mGH}
 	If a sequence of compact mm-spaces $\{ X_n \}_{n \in \mathbb{N}}$ satisfying the condition U($D$) measured Gromov--Hausdorff converges to a compact mm-space $X$, then 
		\eqref{thm: eq: U, udim, box, lower},
		\eqref{eq: thm: U, ldimH, box, lower}, 
		and \eqref{thm: eq: U, essudim, box, lower}
	hold. 
 \end{cor}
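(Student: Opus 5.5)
The corollary follows by composing Proposition \ref{prop: mgh and box} with Theorem \ref{thm: lowersemi, box}; no new estimate on the measures of balls is needed.

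\emph{Step 1.} Let $\{X_n\}_{n\in\mathbb{N}}$ be compact mm-spaces satisfying the condition U($D$), and suppose they measured Gromov--Hausdorff converge to a compact mm-space $X$. Proposition \ref{prop: mgh and box} applies directly, since both the sequence and the limit are compact. It gives that $\{X_n\}_{n\in\mathbb{N}}$ box converges to $X$.

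\emph{Step 2.} The sequence $\{X_n\}_{n\in\mathbb{N}}$ consists of mm-spaces satisfying the condition U($D$), and it box converges to the mm-space $X$. These are exactly the hypotheses of Theorem \ref{thm: lowersemi, box}. That theorem therefore yields \eqref{thm: eq: U, udim, box, lower}, \eqref{eq: thm: U, ldimH, box, lower} and \eqref{thm: eq: U, essudim, box, lower}.

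\emph{Check of hypotheses.} Theorem \ref{thm: lowersemi, box} imposes no condition on the limit $X$ beyond being an mm-space. A compact metric space equipped with a Borel probability measure is complete and separable, so $X$ qualifies. Likewise each compact $X_n$ is an mm-space in the paper's sense, so Proposition \ref{prop: mgh and box} applies without further assumptions.

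The only possible subtlety is whether the quantities in the conclusion depend on the realization of the limit. Box convergence identifies mm-spaces only up to mm-isomorphism, meaning a measure-preserving isometry between the supports. So one should check that $\udimH$, $\ldimH$ and $\essudimH$ are unchanged under mm-isomorphism. This is not an issue here, because the conclusion concerns the same space $X$ in both settings. Since the measured Gromov--Hausdorff limit and the box limit are the same mm-space $X$, no further argument is needed.
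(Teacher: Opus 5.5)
Your proposal is correct and is the paper's argument: the corollary is obtained by combining Proposition \ref{prop: mgh and box} (measured Gromov--Hausdorff convergence of compact mm-spaces implies box convergence) with Theorem \ref{thm: lowersemi, box}. Your final remark about invariance under mm-isomorphism is harmless but not needed, since both the measured Gromov--Hausdorff limit and the box limit are the same space $X$.
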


 \begin{thm}\label{thm: uppersemi, box}
 	Let $\{ X_n \}_{n \in \mathbb{N}}$ be a sequence of mm-spaces satisfying the condition L($D$). 
	If $\{ X_n \}_{n \in \mathbb{N}}$ box converges to an mm-space $X$, we have
		\begin{align}
			\label{thm: eq: B, lowerdim, box, upper}
		\limsup_{n \to \infty} \ldimH (\mu_{X_n}) & \leq \ldimH (\mu_X), \\
			\label{thm: eq: B, esslowerdim, box, upper}
		\limsup_{n \to \infty} \essldimH (\mu_{X_n}) & \leq \essldimH (\mu_X). 
	\end{align}
 \end{thm}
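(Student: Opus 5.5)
Throughout I assume the standard facts about box convergence from \cite{Shioya_2016}. If $\{X_n\}$ box converges to $X$, there are parameters $\varphi_n \colon [0,1) \to X_n$ and $\varphi \colon [0,1) \to X$, with $\varphi_{n*}\mathcal{L}^1 = \mu_{X_n}$ and $\varphi_*\mathcal{L}^1 = \mu_X$. There are also Borel sets $I_n \subset [0,1)$ and numbers $\varepsilon_n \to 0$ such that $\mathcal{L}^1([0,1)\setminus I_n) \le \varepsilon_n$ and $|d_{X_n}(\varphi_n(s),\varphi_n(t)) - d_X(\varphi(s),\varphi(t))| \le \varepsilon_n$ for all $s,t \in I_n$.

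The idea is that the condition L($D$) converts the lower bound on the local dimension into a uniform bound on the measure of balls. This bound passes to the limit through the parametrizations. Concretely, if $X_n$ satisfies L($D$) and $d_n(y)$ denotes the local dimension of $\mu_{X_n}$ at $y$, then $\log\mu_{X_n}(B_r(y))/\log r \ge d_n(y)$ for every $0<r<D$. Since $\log r<0$, this gives
\begin{equation*}
  \mu_{X_n}(B_r(y)) \le r^{d_n(y)} \quad \text{for all } y \in X_n,\ 0<r<D .
\end{equation*}

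For \eqref{thm: eq: B, lowerdim, box, upper}, put $s_n = \ldimH(\mu_{X_n})$. Then $\mu_{X_n}(B_r(y)) \le r^{s_n}$ for every $y \in X_n$ and every $r<D$. Pass to a subsequence along which $s_n \to s \coloneqq \limsup s_n$. If $s=\infty$, replace $s_n$ by $\min\{s_n,M\}$ and let $M \to \infty$ at the end. It suffices to show that every $x \in X$ has local dimension at least $s$.

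If $x \notin \supp\mu_X$, the local dimension is $+\infty$ and there is nothing to prove. Otherwise fix $r$ with $2r<D$. Since $\mathcal{L}^1(\varphi^{-1}(B_r(x))) = \mu_X(B_r(x)) > \varepsilon_n$ for large $n$, we can pick $t \in I_n \cap \varphi^{-1}(B_r(x))$. For any $u \in I_n \cap \varphi^{-1}(B_r(x))$ we have $d_X(\varphi(u),\varphi(t)) \le 2r$, hence $d_{X_n}(\varphi_n(u),\varphi_n(t)) \le 2r+\varepsilon_n$. This yields
\begin{equation*}
  \mu_X(B_r(x)) - \varepsilon_n \le \mu_{X_n}\bigl(B_{2r+\varepsilon_n}(\varphi_n(t))\bigr) \le (2r+\varepsilon_n)^{s_n},
\end{equation*}
valid once $2r+\varepsilon_n<D$. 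Letting $n\to\infty$ gives $\mu_X(B_r(x)) \le (2r)^{s}$ for all $r<D/2$. Therefore
\begin{equation*}
  \frac{\log\mu_X(B_r(x))}{\log r} \ge s\,\frac{\log 2r}{\log r} \longrightarrow s \quad (r\to 0),
\end{equation*}
so the $\liminf$ at $x$ is at least $s$. Taking the infimum over $x$ gives $\ldimH(\mu_X) \ge s$.

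For \eqref{thm: eq: B, esslowerdim, box, upper} the argument is the same, with $s_n = \essldimH(\mu_{X_n})$. Let $A_n \subset X_n$ be a set of full $\mu_{X_n}$-measure on which $d_n \ge s_n$ (truncating at $M$ if $s_n$ is infinite). Then $\mu_{X_n}(B_r(y)) \le r^{s_n}$ for $y \in A_n$ and $r<D$.

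In the choice of $t$, I now require $t \in I_n \cap \varphi_n^{-1}(A_n) \cap \varphi^{-1}(B_r(x))$. This set still has positive measure for large $n$, because $\varphi_n^{-1}(A_n)$ has full Lebesgue measure. The same chain of inequalities then shows that every $x \in \supp\mu_X$, hence $\mu_X$-almost every $x$, has local dimension at least $s$. This gives $\essldimH(\mu_X) \ge s$.

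The main obstacle is the choice of the centre point $\varphi_n(t)$. It must simultaneously lie in the good set $I_n$, map into $B_r(x)$, and, in the essential case, lie in $A_n$. One also has to make sure the error terms $\varepsilon_n$, both in the measure and in the enlarged radius $2r+\varepsilon_n<D$, vanish in the limit without spoiling the exponent. The factor $2$ in the radius is harmless, since $\log(2r)/\log r\to 1$.
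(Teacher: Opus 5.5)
Your proof is correct. It rests on the same core idea as the paper: bound the mass of a ball in $X$ by the mass of a slightly larger ball around a good point of $X_n$, then apply L($D$). The technical route, however, is different.

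\textbf{The paper's route.} The paper fixes $x$ and uses the common-embedding Lemma~\ref{prop: box converge and embedding} to get points $\bar x_n \to x$ in an ambient space. It then moves these to nearby $x_n \in X_n \setminus N_n$, using that the complement of a null set is dense in $X_n$. The Prohorov estimate gives $\mu_{X_n}(B_{r+2\varepsilon}(x_n)) \ge \mu_X(B_r(x)) - \varepsilon$. Finally L($D$) is applied at $x_n$ in logarithmic form. This leaves an additive error $\log(1+\varepsilon/(\mu_X(B_r(x))-\varepsilon))/\log(r+2\varepsilon)$, which disappears after letting $n\to\infty$ and then $\varepsilon\to 0$.

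\textbf{Your route.} You work with parameters and the distortion-controlled set $I_n$. You choose the centre $\varphi_n(t)$ directly from $I_n \cap \varphi_n^{-1}(A_n) \cap \varphi^{-1}(B_r(x))$. This removes the need for an approximating sequence for $x$ and for the density argument. The cost is the radius $2r+\varepsilon_n$ instead of $r+2\varepsilon$, which does not affect the exponent. Writing L($D$) multiplicatively as $\mu_{X_n}(B_\rho(y)) \le \rho^{s_n}$ lets you pass to the limit in one step, $\mu_X(B_r(x)) \le (2r)^s$. This is cleaner than the paper's logarithmic bookkeeping.

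\textbf{A stronger conclusion.} Your argument shows that every point of $X$ has local dimension at least $\limsup_n \essldimH(\mu_{X_n})$. The paper's argument gives the same, since it never actually uses $x \notin N$. In other words, $\limsup_n \essldimH(\mu_{X_n}) \le \ldimH(\mu_X)$, which yields both inequalities at once.

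\textbf{A minor point.} The definition of $\square$ only provides, for each $n$, a pair of parameters $(\varphi_n, \psi_n)$, where $\psi_n$ is a parameter of $X$ that depends on $n$. A single $\varphi$ valid for all $n$ would need an extra lemma. Your argument never uses independence of $n$, though: each step is carried out for fixed $n$ and only uses $\mathcal{L}^1(\psi_n^{-1}(B_r(x))) = \mu_X(B_r(x))$. So you can simply replace $\varphi$ by $\psi_n$ throughout.
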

 
 Combining Proposition \ref{prop: mgh and box} and Theorem \ref{thm: uppersemi, box}, we obtain the following. 
  
 \begin{cor}\label{cor: uppersemi, mGH}
 	If a sequence of compact mm-spaces $\{ X_n \}_{n \in \mathbb{N}}$ satisfying the condition L($D$) measured Gromov--Hausdorff converges to a compact mm-space $X$, then \eqref{thm: eq: B, lowerdim, box, upper} and \eqref{thm: eq: B, esslowerdim, box, upper} hold.
 \end{cor}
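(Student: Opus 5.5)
The statement to prove is Corollary \ref{cor: uppersemi, mGH}. It should follow formally from Proposition \ref{prop: mgh and box} and Theorem \ref{thm: uppersemi, box}, in the same way Corollary \ref{cor: lowersemi, mGH} follows from Theorem \ref{thm: lowersemi, box}. The plan is to reduce measured Gromov--Hausdorff convergence to box convergence and then apply the box-convergence theorem verbatim.

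Concretely, let $\{X_n\}_{n \in \mathbb{N}}$ be a sequence of compact mm-spaces, each satisfying the condition L($D$) for the fixed $0 < D \leq 1$, and suppose it measured Gromov--Hausdorff converges to a compact mm-space $X$. First, Proposition \ref{prop: mgh and box} gives that $\{X_n\}$ box converges to $X$. Second, the hypotheses of Theorem \ref{thm: uppersemi, box} are now met: the $X_n$ are mm-spaces (compact metric spaces are complete and separable), they satisfy L($D$), and they box converge to the mm-space $X$. Theorem \ref{thm: uppersemi, box} then yields \eqref{thm: eq: B, lowerdim, box, upper} and \eqref{thm: eq: B, esslowerdim, box, upper}.

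There is no substantive obstacle. The only point to check is the bookkeeping: the quantities $\ldimH$ and $\essldimH$ in the corollary depend only on the metric and the measure of each space. They are therefore the same whether the spaces are viewed as elements of the measured Gromov--Hausdorff setting or of the box setting, so the conclusion of the theorem transfers unchanged.
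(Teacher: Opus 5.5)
Your proposal is correct and matches the paper's argument exactly: the paper obtains this corollary by combining Proposition \ref{prop: mgh and box} (measured Gromov--Hausdorff convergence of compact mm-spaces implies box convergence) with Theorem \ref{thm: uppersemi, box}. Your bookkeeping remark also holds under the paper's standing convention $X = \supp \mu_X$, which makes $\ldimH$ and $\essldimH$ well defined on mm-isomorphism classes.
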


 \begin{thm}\label{thm: uppersemi, mGH}
 	Suppose that a sequence of compact mm-spaces $\{ X_n \}$ satisfying the condition L($D$) measured Gromov--Hausdorff converges to a compact mm-space $X$.  
	Then we have
	\begin{align}
			\label{thm: eq: B, udimH, mGH, upper}
		\limsup_{n \to \infty} \udimH (\mu_{X_n}) & \leq \udimH (\mu_X).
	\end{align}
 \end{thm}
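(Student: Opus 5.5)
The plan is to exploit the fact that, under the condition L($D$), the local dimension at a point gives a \emph{uniform} upper bound on the measure of all balls of radius less than $D$. This bound is a closed condition, so it can be passed to the limit along measured Gromov--Hausdorff approximations. The point chosen in the limit space $X$ will then have local dimension at least $\alpha$, for any $\alpha < \limsup_{n} \udimH(\mu_{X_n})$.

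First, I extract the uniform estimate on $X_n$. For $x \in X_n$ write $d_n(x)$ for the local dimension, i.e.\ the limit in the condition L($D$). That condition says $d_n(x) = \inf_{0<r<D} \log\mu_{X_n}(B_r(x))/\log r$. Since $0<r<D\le 1$ gives $\log r<0$, this means
\[
\mu_{X_n}(B_r(x)) \le r^{d_n(x)} \quad \text{for all } 0<r<D.
\]
This remains valid, with the convention $\log 0=-\infty$, when $\mu_{X_n}(B_r(x))=0$. Now fix a real $\alpha < \limsup_{n}\udimH(\mu_{X_n})$; the argument also covers the case where this limsup is $+\infty$. Pass to a subsequence and choose $x_n \in X_n$ with $d_n(x_n) > \alpha$. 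Then $\mu_{X_n}(B_r(x_n)) \le r^{\alpha}$ for all $0<r<D$ and all $n$ in the subsequence.

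Second, I use the measured Gromov--Hausdorff convergence. Take $\varepsilon_n$-isometries $f_n \colon X_n \to X$ with $\varepsilon_n \to 0$ and $(f_n)_*\mu_{X_n} \to \mu_X$ weakly. Since $X$ is compact, after a further subsequence $f_n(x_n) \to x$ for some $x\in X$. Put $\eta_n := \varepsilon_n + d_X(f_n(x_n),x) \to 0$. For the open ball $U_s(x)$ of radius $s<D$, the almost-isometry property gives
\[
f_n^{-1}(U_s(x)) \subset B_{s+\eta_n}(x_n).
\]
By the portmanteau theorem for open sets,
\[
\mu_X(U_s(x)) \le \liminf_n \mu_{X_n}\bigl(f_n^{-1}(U_s(x))\bigr) \le \liminf_n (s+\eta_n)^\alpha = s^\alpha ,
\]
using that $s+\eta_n<D$ for large $n$. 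For $0<r<s<D$ we have $B_r(x)\subset U_s(x)$, hence $\mu_X(B_r(x))\le s^\alpha$. Letting $s\downarrow r$ gives $\mu_X(B_r(x))\le r^\alpha$ for all $0<r<D$.

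Finally, this gives $\log\mu_X(B_r(x))/\log r \ge \alpha$ for all small $r$, so the $\liminf$ at $x$ is at least $\alpha$ and $\udimH(\mu_X)\ge\alpha$. Since $\alpha$ was arbitrary, this yields \eqref{thm: eq: B, udimH, mGH, upper}.

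The main obstacle I expect is the second step: transferring ball estimates through the approximations requires the closed-to-open ball comparison and the right direction of the portmanteau inequality. This is exactly why the sup-type quantity $\udimH$ needs pointwise mGH approximations (and compactness of $X$ to produce the limit point $x$), rather than only box convergence.
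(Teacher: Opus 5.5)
Your proof is correct and follows essentially the same route as the paper. Both arguments choose near-extremal points $x_n$, use compactness of $X$ to get $f_n(x_n)\to x$, and then transfer the uniform-in-$r$ ball bound supplied by L($D$) to $x$, using weak convergence and the fact that $f_n$-preimages of balls lie in slightly larger balls. The differences are only in presentation: you write L($D$) as $\mu_{X_n}(B_r(x_n))\le r^{\alpha}$ and use the portmanteau inequality for open balls, where the paper uses an explicit Prohorov estimate and logarithmic manipulations. Your formulation also handles the case $\limsup_n \udimH(\mu_{X_n})=+\infty$ without the separate argument the paper gives.
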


 We summarize the results in the following table. 
 In the table, “No” means that there are counterexamples. 
We give them in Section \ref{sec: example}. 
 \renewcommand{\arraystretch}{1.25}

\vspace{0.1cm}
\noindent
\begin{center}
\begin{tabular}{|c|c|c|c|c|} \hline
 \multicolumn{5}{|c|}{Condition U($D$)} \\ \hline
 & $\square$-lower & $\square$-upper & mGH-lower & mGH-upper \\ \hline
   $\udimH$ &  Yes & No & Yes & No  \\ \hline
   $\ldimH $ & Yes & No	& Yes & No  \\ \hline
   $\essudimH$  &  Yes & No & Yes & No \\ \hline
   $\essldimH$ & No & No & No & No \\ \hline
    \multicolumn{5}{|c|}{Condition L($D$)} \\ \hline
    $\udimH$ & No & No & No & Yes \\ \hline
     $\ldimH $ & No & Yes & No & Yes \\ \hline
   $\essudimH$  & No & No & No & No \\ \hline
   $\essldimH$  & No & Yes & No & Yes \\ \hline
\end{tabular}
\end{center}

\begin{ack}
	The author would like to thank Professor Takashi Shioya for his valuable comments and suggestions. 
	He also thanks Shigeaki Yokota and Toshiaki Miyamoto for many stimulating discussions. 
	He used ChatGPT-6 Astra to prove Lemma \ref{lem: estimate ball} and Examples \ref{ex: counterexample essldimH} and \ref{ex: counterexample, essudimH}. 
\end{ack}

\section{Preliminaries} \label{sec: preli}
	For further details in this section, see \cite{Gromov_greenbook, Shioya_2016}. 
	
\subsection{Box convergence}
	
	\begin{dfn}[mm-isomorphic]
		Let $X$ and $Y$ be mm-spaces. 
		We say that $X$ is {\it mm-isomorphic} to $Y$ if there exists an isometry $f \colon \supp \mu_X \to \supp \mu_Y$ such that $f_* \mu_X = \mu_Y$, where $f_* \mu_X$ is the push-forward measure of $\mu_X$ by $f$. 
	\end{dfn}
	
	The mm-isomorphism relation is an equivalence relation. 
	Let $\mathcal{X}$ denote the set of all mm-isomorphism classes of mm-spaces. 
	
\begin{rmk}
		$\mathcal{X}$ is expressed as a set. 
\end{rmk}

In this paper, we assume that $X = \supp \mu_X$ for an mm-space $X$. 

\begin{dfn}[Distortion]
	Let $(X, d_X)$ and $(Y, d_Y)$ be metric spaces. 
	The {\it distortion}  of a subset $S \subset X \times Y$ is defined by 
	\[
		 \dis (S) \coloneqq 
		\begin{cases*}
			\sup \{ |d_X (x, x^{\prime}) - d_Y (y, y^{\prime})| \mid (x, y), (x^{\prime}, y^{\prime}) \in S \} & if $S \neq \emptyset$,  \\
			0 & if $S = \emptyset$.
		\end{cases*}
	\]
\end{dfn}

\begin{dfn}[Transport plan]
	Suppose that $X$ and $Y$ are topological spaces. 
		Let $\mu_X$ and $\mu_Y$ be finite Borel measures on $X$ and $Y$, respectively. 
		Let $p_X \colon X \times Y \to X$ and $p_Y \colon X \times Y \to Y$ be the projection maps. 
		We call a Borel measure $\pi$ on $X \times Y$ a {\it transport plan} between $\mu_X$ and $\mu_Y$ if $\pi$ satisfies $(p_X)_* \pi = \mu_X$ and $(p_Y)_* \pi = \mu_Y$. 
		We write $\Pi (\mu_X, \mu_Y)$ for the set of all transport plans between $\mu_X$ and $\mu_Y$. 
 	\end{dfn}

\begin{dfn}[Box distance, \cite{Nakajima:2022aa}]
	For $X, Y \in \mathcal{X}$, we define
	\[
		\square (X, Y) \coloneqq \inf_{\pi \in \Pi (\mu_X, \mu_Y)} \inf_{S \subset X \times Y \text{: Borel}} \max \{ \dis (S), 1 - \pi (S) \}. 
	\]
	We call it the {\it box distance} between $X$ and $Y$. 
\end{dfn}

\begin{thm} \label{thm: box distance complete separable}
	$(\mathcal{X}, \square)$ is a complete separable metric space. 
\end{thm}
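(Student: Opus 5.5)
We need to prove that $(\mathcal{X}, \square)$ is a complete separable metric space. This is a standard result (Gromov; Shioya's book; Nakajima for this formulation of the box distance). The plan splits into three parts: that $\square$ is a metric on mm-isomorphism classes, that it is separable, and that it is complete.

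\emph{Metric axioms.} Well-definedness on isomorphism classes and symmetry are immediate, since an mm-isomorphism transports plans and sets without changing distortion or mass.

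For the triangle inequality, take near-optimal $(\pi_1,S_1)$ for $(X,Y)$ and $(\pi_2,S_2)$ for $(Y,Z)$. Glue $\pi_1,\pi_2$ along $\mu_Y$ via disintegration to get a measure on $X\times Y\times Z$, and let $\pi$ be its projection to $X\times Z$. Let $S$ be the projection of $(S_1\times Z)\cap(X\times S_2)$. Then $\dis S\le \dis S_1+\dis S_2$ and $1-\pi(S)\le (1-\pi_1(S_1))+(1-\pi_2(S_2))$. Measurability of the projected set is a technical issue; I would replace $S$ by a compact subset of nearly full measure, using inner regularity on Polish spaces.

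Nondegeneracy is the hard direction. If $\square(X,Y)=0$, choose $(\pi_n,S_n)$ with $\dis S_n<1/n$ and $\pi_n(S_n)>1-1/n$. By Prokhorov, $\pi_n\to\pi$ weakly along a subsequence, since the marginals are fixed and tight. A Hausdorff-type limit of the sets $S_n$, restricted to compact pieces, gives a closed set $S$ of full $\pi$-measure with $\dis S=0$. Hence $S$ is the graph of an isometry between full-measure subsets, and it pushes $\mu_X$ to $\mu_Y$. Taking closures and using $X=\supp\mu_X$ yields an mm-isomorphism. Equivalently, one can show that $\square=0$ forces all distance-matrix distributions to agree and then invoke Gromov's reconstruction theorem; I would present the direct argument.

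\emph{Separability.} Finite mm-spaces with rational distances and rational weights form a countable set. Given $X$, pick a compact $K$ with $\mu_X(K)>1-\varepsilon$ and partition $K$ into finitely many Borel pieces of diameter less than $\varepsilon$. Send each piece to a chosen point and dump the leftover mass onto one of them. The natural plan and graph set then give $\square\lesssim 2\varepsilon$. Perturbing to rational data costs arbitrarily little.

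\emph{Completeness.} This is the main obstacle. Given a Cauchy sequence, pass to a subsequence with $\square(X_n,X_{n+1})<2^{-n}$. Glue the plans $\pi_n$ successively by the gluing lemma to obtain a probability measure on $\prod_n X_n$ whose consecutive marginals are $\pi_n$. By Borel--Cantelli, almost every sequence $(x_n)$ lies in $S_n$ for all large $n$. For such sequences, $d(x,x')\coloneqq\lim_n d_{X_n}(x_n,x'_n)$ exists, since consecutive differences are bounded by $\dis S_n<2^{-n}$.

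Define the limit space as the completion of the quotient of this full-measure set by the pseudometric $d$, with the pushforward measure. One must check that this space is separable; this follows from the tightness and approximation provided by the separability step. Then estimate $\square(X_n,X)\le\sum_{k\ge n}2^{-k}$ by pairing $x_n$ with the class of $(x_k)$ on the set where the sequence stays in $S_k$ for all $k\ge n$. Measurability of the map into the quotient and handling of the null sets are the delicate points.
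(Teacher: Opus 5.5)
The paper does not prove this theorem. It is recalled as background (Gromov's and Shioya's books, with Nakajima for the transport-plan formula for $\square$), so there is no in-paper argument to compare with. Judged on its own, your outline is a correct proof.

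The standard treatments (Gromov, Shioya) use parameters $\varphi\colon[0,1)\to X$ with $\varphi_*\mathcal{L}^1=\mu_X$, i.e.\ they represent mm-spaces as pseudometrics on $([0,1),\mathcal{L}^1)$. The natural completeness argument there obtains the limit as $\lim_n\varphi_n^*d_{X_n}$ for compatibly chosen parameters, which relies on the isomorphism theory of standard probability spaces to align them. Your Ionescu--Tulcea measure $P$ on $\prod_n X_n$, with the coordinate projections playing the role of parameters, is the coupling version of the same construction. It needs only disintegration, at the price of measurability bookkeeping on an infinite product. A third route: gluing $X\sqcup Y$ along $S$, and conversely Strassen's theorem, give $d_{\mathrm{GP}}\le\square\le 2d_{\mathrm{GP}}$ for the Gromov--Prokhorov metric, so the statement also follows from the known Polishness of that metric.

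Two steps need a sharper justification than the one you indicate.

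In the nondegeneracy step, the essential point is that two pairs must be compared through the \emph{same} $S_n$. The simplest closed, full-measure set is $S=\supp\pi$. For $(x,y),(x',y')\in\supp\pi$ and $\delta>0$, weak convergence gives $\liminf_n\pi_n(U_\delta(x,y))>0$, and likewise at $(x',y')$. Hence for large $n$ both neighbourhoods meet $S_n$, and $|d_X(x,x')-d_Y(y,y')|\le 1/n+4\delta$. If you use Hausdorff limits on compact pieces instead, you need a single diagonal subsequence for all pieces.

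In the completeness step, tightness of $P$ alone does not make the quotient separable. What does is the following. Replace each $S_k$ by its closure and set $G_N\coloneqq\{(x_k)_k \mid (x_k,x_{k+1})\in S_k \text{ for all } k\ge N\}$, a closed set increasing in $N$. On $G_N$ one has $|d(x,x')-d_{X_N}(x_N,x'_N)|\le 2^{1-N}$, so $x\mapsto x_N$ is a $2^{1-N}$-isometry into the separable space $X_N$. Taking countable nets for every $N$ then gives a countable dense subset of $(\bigcup_N G_N,d)$. The same estimate gives two further facts:
\begin{itemize}
\item the quotient map $\Phi$ is continuous on each $G_N$ for the product topology, hence Borel;
\item for a compact $C\subset G_n$ with $P(C)$ close to $P(G_n)\ge 1-2^{1-n}$, the image of $C$ under $x\mapsto(x_n,\Phi(x))$ is compact, hence a legitimate Borel set for the bound $\square(X_n,X)\le 2^{1-n}$.
\end{itemize}
Finally, take $X\coloneqq\supp\Phi_*P$ to match the convention $X=\supp\mu_X$.
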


The topology on $\mathcal{X}$ induced by the box distance is called the \textit{box topology}. 

\begin{dfn}[Prohorov distance]
	Let $(X, d_X)$ be a metric space and $\mathcal{M} (X)$ be the set of all Borel probability measures on $X$. 
	For $\mu, \nu \in \mathcal{M} (X)$, we define
		\[
			d_{\mathrm{P}} (\mu, \nu) \coloneqq \inf \{ \varepsilon > 0 \mid \text{for any Borel set $A$ of $X$,  $\mu (U_{\varepsilon} (A)) \geq \nu (A) - \varepsilon$} \}, 
		\]
		where we set
		\[
			U_{\varepsilon} (A) \coloneqq 
			\begin{cases*}
				\{ x \in X \mid d_X (x, A) < \varepsilon \} & if $A \neq \emptyset$,   \\
				\emptyset & if $A = \emptyset$, 
			\end{cases*}
		\]
		and $d_X (x, A) \coloneqq \inf_{y \in A} d_X (x, y)$. 
		We refer to $d_{\mathrm{P}}$ as the {\it Prohorov distance}. 
	\end{dfn}
	
	\begin{thm}[{\cite[\S 6]{Billingsley_1999}}]
		Let $X$ be a separable metric space. 
		Then $d_{\mathrm{P}}$ is a metric on $\mathcal{M} (X)$ and a sequence in $\mathcal{M} (X)$ converges in the sense of the Prohorov distance if and only if it converges weakly.  
	\end{thm}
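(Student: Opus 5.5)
We prove the two assertions of the theorem separately. The first is that $d_{\mathrm{P}}$ is a metric on $\mathcal{M}(X)$; the second is that, for separable $X$, convergence in $d_{\mathrm{P}}$ is equivalent to weak convergence.

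\emph{Metric axioms.} First we show that the defining condition can be reversed. Suppose $\mu(U_\varepsilon(A)) \ge \nu(A) - \varepsilon$ for all Borel $A$. Fix a Borel set $B$ and put $A = X \setminus U_\varepsilon(B)$. Every point of $U_\varepsilon(A)$ lies at distance $<\varepsilon$ from a point that is at distance $\ge \varepsilon$ from $B$, so $U_\varepsilon(A) \cap B = \emptyset$. Hence
\[
\nu(U_\varepsilon(B)) = 1 - \nu(A) \ge 1 - \mu(U_\varepsilon(A)) - \varepsilon \ge \mu(B) - \varepsilon .
\]
Thus the condition is symmetric in $\mu$ and $\nu$, and so is $d_{\mathrm{P}}$. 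The value $d_{\mathrm{P}}$ is finite, since $\varepsilon = 1$ always works. The triangle inequality follows from $U_\delta(U_\varepsilon(A)) \subset U_{\varepsilon+\delta}(A)$.

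For definiteness, suppose $d_{\mathrm{P}}(\mu,\nu) = 0$. The admissible set of $\varepsilon$ is upward closed, so every $\varepsilon > 0$ is admissible. For a closed set $F$ we have $\bigcap_\varepsilon U_\varepsilon(F) = F$, so letting $\varepsilon \to 0$ gives $\mu(F) \ge \nu(F)$, and by symmetry $\mu(F) = \nu(F)$. Closed sets form a $\pi$-system generating the Borel $\sigma$-algebra, so $\mu = \nu$.

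\emph{Prohorov convergence implies weak convergence.} Assume $d_{\mathrm{P}}(\mu_n,\mu) \to 0$ and let $F$ be closed. For $\varepsilon > d_{\mathrm{P}}(\mu_n,\mu)$, the symmetric form of the condition gives $\mu_n(F) \le \mu(U_\varepsilon(F)) + \varepsilon$. Hence $\limsup_n \mu_n(F) \le \mu(U_\varepsilon(F)) + \varepsilon$ for every $\varepsilon > 0$, and letting $\varepsilon \to 0$ yields $\limsup_n \mu_n(F) \le \mu(F)$. By the portmanteau theorem, $\mu_n \to \mu$ weakly.

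\emph{Weak convergence implies Prohorov convergence.} This is the main step, and it is where separability enters. Fix $\varepsilon > 0$.

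First, take a countable dense set $\{x_i\}$. Choose radii $r_i \in (\varepsilon/4, \varepsilon/2)$ with $\mu(\partial B^\circ_{r_i}(x_i)) = 0$, where $B^\circ$ denotes an open ball. This is possible because, for fixed $x_i$, only countably many spheres can carry positive mass.

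Second, disjointify these balls into sets $P_i$ of diameter $<\varepsilon$ whose boundaries are $\mu$-null; boundaries of finite Boolean combinations lie in the union of the individual boundaries, so they stay null. Choose $k$ with $\mu\bigl(\bigcup_{i\le k} P_i\bigr) > 1 - \varepsilon$.

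Third, let $\mathcal{C}$ be the finite family of unions of subcollections of $P_1,\dots,P_k$. Each member of $\mathcal{C}$ is a $\mu$-continuity set, so weak convergence gives $|\mu_n(C) - \mu(C)| < \varepsilon$ for all $C \in \mathcal{C}$ once $n$ is large.

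Now let $A$ be Borel and let $C$ be the union of those $P_i$, $i \le k$, that meet $A$. Since each $P_i$ has diameter $<\varepsilon$, we get $C \subset U_\varepsilon(A)$. Therefore
\[
\mu(A) \le \mu(C) + \varepsilon < \mu_n(C) + 2\varepsilon \le \mu_n(U_{2\varepsilon}(A)) + 2\varepsilon .
\]
Hence $d_{\mathrm{P}}(\mu_n,\mu) \le 2\varepsilon$ for all large $n$, which completes the proof.
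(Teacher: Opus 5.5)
Your proof is correct. The paper gives no proof of its own and only cites \S 6 of Billingsley, and your argument is essentially the standard one from that source: closed sets determine the measure, the closed-set half of the portmanteau theorem gives one direction, and a finite partition into small $\mu$-continuity sets built from a countable dense set gives the other. The one step beyond what Billingsley does is the complement trick with $A = X \setminus U_\varepsilon(B)$, and you need it: the paper defines $d_{\mathrm{P}}$ by a one-sided inequality, whereas Billingsley's definition imposes both inequalities, so symmetry there is automatic.
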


\begin{prop} \label{prop: box and prokhorov}
	Let $(X, d_X)$ be a complete separable metric space. For $\mu, \nu \in \mathcal{M} (X)$, we have
	\[
		\square ( (X, \mu), (X, \nu)) \leq 2 d_{\mathrm{P}} (\mu, \nu). 
	\]
\end{prop}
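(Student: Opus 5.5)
Since both spaces have the same underlying set $X$, I will use the diagonal as the Borel set $S$, which has zero distortion, and then look for a transport plan that puts almost all of its mass near the diagonal. The diagonal itself will usually carry too little mass, so I will enlarge it to a neighborhood whose distortion is controlled by the triangle inequality. Concretely, let $\varepsilon > d_{\mathrm{P}}(\mu,\nu)$. By Strassen's theorem (the coupling characterization of the Prohorov distance, cf.\ \cite[\S 6]{Billingsley_1999}; it needs $X$ separable), there is a transport plan $\pi \in \Pi(\mu,\nu)$ with
\[
\pi\bigl(\{(x,y) \in X \times X \mid d_X(x,y) > \varepsilon\}\bigr) \leq \varepsilon .
\]
I would take this as the main input and cite it rather than reprove it.

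Next, set $S \coloneqq \{(x,y) \in X\times X \mid d_X(x,y) \le \varepsilon\}$. This set is closed, hence Borel. For $(x,y),(x',y') \in S$ the triangle inequality gives
\[
|d_X(x,x') - d_X(y,y')| \le d_X(x,y) + d_X(x',y') \le 2\varepsilon,
\]
so $\dis(S) \le 2\varepsilon$. Moreover $1 - \pi(S) \le \varepsilon \le 2\varepsilon$. It follows that $\square((X,\mu),(X,\nu)) \le \max\{\dis(S), 1-\pi(S)\} \le 2\varepsilon$. Letting $\varepsilon \downarrow d_{\mathrm{P}}(\mu,\nu)$ gives the claim.

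One detail concerns the convention $X = \supp\mu_X$. The mm-spaces $(X,\mu)$ and $(X,\nu)$ are mm-isomorphic to $(\supp\mu,\mu)$ and $(\supp\nu,\nu)$. The plan $\pi$ is concentrated on $\supp\mu\times\supp\nu$, so I restrict $S$ to that product, which can only decrease the distortion and does not change $\pi(S)$.

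The main obstacle is the existence of the near-diagonal coupling, i.e.\ Strassen's theorem. If I cannot cite it, I would prove it directly. First I would approximate $\mu$ and $\nu$ by measures on finitely many cells of diameter less than $\delta$, which is possible by separability. On these discrete approximations, the condition $\mu(U_\varepsilon(A)) \ge \nu(A) - \varepsilon$ is exactly the Hall/max-flow condition for a network flow that sends $\nu$-mass to $\mu$-mass within distance $\varepsilon + 2\delta$ and leaves at most $\varepsilon$ unmatched. The unmatched mass I would couple arbitrarily, for instance by a product coupling. Finally, I would pass to a weak limit as $\delta\to 0$, using tightness of $\Pi(\mu,\nu)$ and closedness of $S$ (via the portmanteau theorem) to keep $\pi(S) \ge 1-\varepsilon$.
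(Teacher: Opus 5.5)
The paper gives no proof of this proposition: it is quoted in the preliminaries with a pointer to \cite{Shioya_2016}, and the standard proof there is the argument you give. Strassen's theorem supplies $\pi\in\Pi(\mu,\nu)$ with $\pi(\{d_X>\varepsilon\})\le\varepsilon$, and the triangle inequality bounds the distortion of $\{d_X\le\varepsilon\}$ by $2\varepsilon$. Your proposal is correct. Working directly with the transport-plan definition of $\square$ used in this paper, instead of Gromov's parametrizations, makes it slightly more direct, and restricting $S$ to $\supp\mu\times\supp\nu$ correctly handles the convention $X=\supp\mu_X$.
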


Proposition \ref{prop: box and prokhorov} shows that we can estimate the box distance by the Prohorov distance when the two measures are defined on the same metric space.
In general, the following statement is known. 

\begin{lem} [{\cite[Lemma 4.2]{kazukawadoctor}}]\label{prop: box converge and embedding}
	Let $X$ be an mm-space and $\{ X_n\}_{n \in \mathbb{N}}$ be a sequence of mm-spaces.  
	If $\{ X_n \}_{n \in \mathbb{N}}$ box converges to $X$ as $n \to \infty$, 
	then there exist a complete separable metric space $Y$ and isometric embeddings $\iota \colon X \to Y$, $\iota_n \colon X _n \to Y$ ($n \in \mathbb{N}$) such that $(\iota_n)_* \mu_{X_n}$ converges weakly to $\iota_* \mu_X$ and for $x \in X$, there exist $x_n \in X_n$ ($n \in \mathbb{N}$) such that $\{ \iota_n (x_n) \}_{n \in \mathbb{N}}$ converges to $\iota (x)$. 
\end{lem}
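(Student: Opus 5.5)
The plan is to build $Y$ by gluing each $X_n$ to $X$ along an almost-isometric correspondence coming from the definition of $\square$, and then amalgamating all these gluings along the common copy of $X$.

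\textbf{Step 1: correspondences.} Put $\varepsilon_n \coloneqq \square(X_n, X) + 1/n$, so $\varepsilon_n \to 0$. By the definition of the box distance there are $\pi_n \in \Pi(\mu_{X_n}, \mu_X)$ and Borel sets $S_n \subset X_n \times X$ with
\[
\dis(S_n) \leq \varepsilon_n \quad\text{and}\quad \pi_n(S_n) \geq 1-\varepsilon_n .
\]
For large $n$ we have $\varepsilon_n<1$, so $S_n \neq \emptyset$; finitely many $n$ may be discarded, or treated with an arbitrary gluing.

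\textbf{Step 2: gluing.} On $Z_n \coloneqq X_n \sqcup X$ keep $d_{X_n}$ and $d_X$ on each piece. For $y \in X_n$ and $x \in X$ set
\[
d_n(y,x) \coloneqq \inf_{(y',x') \in S_n} \bigl( d_{X_n}(y,y') + \varepsilon_n/2 + d_X(x',x) \bigr).
\]
The triangle inequalities of mixed type $y,x,\bar y$ (and $x,y,\bar x$) must be checked, and they follow from $\dis(S_n)\le\varepsilon_n$. Indeed, for $(y',x'),(y'',x'')\in S_n$,
\[
d_{X_n}(y,\bar y) \leq d(y,y') + d(y',y'') + d(y'',\bar y) \leq d(y,y') + d(x',x'') + \varepsilon_n + d(y'',\bar y),
\]
and $d(x',x'') \le d(x',x)+d(x,x'')$, so $d_{X_n}(y,\bar y)\le d_n(y,x)+d_n(x,\bar y)$. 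The additive term $\varepsilon_n/2$ keeps the metric positive between the two pieces. Then let $Y$ be the completion of $X \sqcup \bigsqcup_n X_n$, with
\[
d(y_n,y_m) \coloneqq \inf_{x\in X}\bigl(d_n(y_n,x)+d_m(x,y_m)\bigr) \quad (n \neq m).
\]
This is the standard amalgamation of metric spaces along a common subspace, so it is a metric. $Y$ is separable as a countable union of separable spaces, and the inclusions $\iota$, $\iota_n$ are isometric embeddings. I expect the metric verification in this step to be the main, though routine, obstacle.

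\textbf{Step 3: weak convergence.} For $(y,x)\in S_n$ we have $d(y,x)=\varepsilon_n/2$ exactly: it is at most $\varepsilon_n/2$ by taking $(y',x')=(y,x)$, and at least $\varepsilon_n/2$ by definition. Hence for a Borel set $A \subset Y$,
\[
(\iota_n)_*\mu_{X_n}(A) \leq \pi_n\bigl(S_n \cap (A\times X)\bigr)+\varepsilon_n \leq \mu_X\bigl(U_{\varepsilon_n}(A)\bigr)+\varepsilon_n ,
\]
and the symmetric inequality holds as well. Therefore $d_{\mathrm P}\bigl((\iota_n)_*\mu_{X_n},\iota_*\mu_X\bigr)\le\varepsilon_n \to 0$, which gives weak convergence by the cited theorem of \cite{Billingsley_1999}.

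\textbf{Step 4: approximating points.} Let $x\in X=\supp\mu_X$ and $\delta>0$. Then $\mu_X(B_\delta(x))>0$. Once $\varepsilon_n<\mu_X(B_\delta(x))$ we get
\[
\pi_n\bigl(S_n\cap(X_n\times B_\delta(x))\bigr) \geq \mu_X(B_\delta(x)) - \varepsilon_n > 0 .
\]
So there is $(y,x')\in S_n$ with $d(x',x) \le \delta$, and hence $d(y,x)\le\delta+\varepsilon_n/2$. Take $\delta_k\downarrow0$ and thresholds $N_k$, increased if needed so that the $N_k$ are increasing. For $N_k\le n<N_{k+1}$ choose $x_n$ as above with $\delta=\delta_k$, and choose $x_n$ arbitrarily for $n<N_1$. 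Then $d(\iota_n(x_n),\iota(x)) \le \delta_k+\varepsilon_n/2$ for $N_k\le n<N_{k+1}$, so $\iota_n(x_n)\to\iota(x)$.
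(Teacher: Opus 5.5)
Your proof is correct and complete. The paper gives no proof of this lemma; it quotes it from Kazukawa's thesis, so there is no in-paper argument to compare against. What you supply is a self-contained proof working directly from the definition of $\square$.

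The gluing in Step 2 is legitimate even though $S_n$ is not a full correspondence. The only mixed triangle inequalities that involve $S_n$ are the ones you check, and they use nothing beyond $\dis(S_n)\le\varepsilon_n$ and $S_n\neq\emptyset$. The amalgamation along $X$ is then a metric, because any triangle inequality passing through a third block reduces to one inside a single $Z_n$ via $d_m(x,y_m)+d_m(y_m,x')\ge d_X(x,x')$.

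A common alternative is to first pass to $\varepsilon$-mm-isomorphisms via Proposition \ref{prop: box distance and epsilon-mm-ismom}, or to the Gromov--Prohorov distance. One then embeds each pair $X_n, X$ with small Prohorov distance and glues along $X$. Working with the plan $\pi_n$ and the set $S_n$ directly gives you the clean bound $d_{\mathrm P}\bigl((\iota_n)_*\mu_{X_n},\iota_*\mu_X\bigr)\le\varepsilon_n$ with no constant-factor losses.

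Step 4 does not actually need the plan; it follows from Step 3 alone. For $x\in\supp\mu_X$, the portmanteau theorem gives
\[
\liminf_{n\to\infty}(\iota_n)_*\mu_{X_n}\bigl(U_\delta(\iota(x))\bigr)\ge\mu_X\bigl(U_\delta(x)\bigr)>0 ,
\]
so $\iota_n(X_n)$ meets $U_\delta(\iota(x))$ for all large $n$. Hence the second conclusion holds in any common embedding realizing weak convergence. This is exactly where the standing assumption $X=\supp\mu_X$ enters, as it does in your argument.

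The statement asks for an embedding for every $n$, so the finitely many indices with $\varepsilon_n\ge 1$ cannot literally be discarded. Use your ``arbitrary gluing'' option there, for instance by replacing $S_n$ with a singleton, which has distortion $0$. Also take the thresholds $N_k$ strictly increasing in Step 4.
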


Next, we explain the relation between the box distance and an $\varepsilon$-mm-isomorphism. 

\begin{dfn}[$\varepsilon$-mm-isomorphism]
	Let $X$, $Y$ be mm-spaces and $\varepsilon \geq 0$. 
	We call a Borel measurable map $f \colon X \to Y$ an \textit{$\varepsilon$-mm-isomorphism} if there exists a Borel set $\tilde{X} \subset X$ such that
	\begin{itemize}
		\item $\mu_X (\tilde{X}) \geq 1 - \varepsilon$, 
		\item $|d_X (x_1, x_2) - d_Y (f (x_1), f (x_2))| \leq \varepsilon$ for $x_1, x_2 \in \tilde{X}$, 
		\item $d_{\mathrm{P}} (f_* \mu_X, \mu_Y) \leq \varepsilon$. 
	\end{itemize}
	We say that $\tilde{X}$ is a \textit{non-exceptional domain} of $f$. 
\end{dfn}

\begin{prop} \label{prop: box distance and epsilon-mm-ismom}
	Let $X$, $Y$ be mm-spaces and $\varepsilon \geq 0$.
	The following statements hold.
	\begin{enumerate}
		\item If $\square (X, Y) < \varepsilon$, then there exists a $3 \varepsilon$-mm-isomorphism $f \colon X \to Y$. 
		\item If there exists an $\varepsilon$-mm-isomorphism $f \colon X \to Y$, then $\square (X, Y) \leq 3 \varepsilon$. 
	\end{enumerate}

\end{prop}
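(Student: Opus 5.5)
For both parts I would unwind the definition of the box distance: for (1) I pass from a plan and a good set to a map, and for (2) I pass from the map back to a plan and then use the triangle inequality together with Proposition \ref{prop: box and prokhorov}.

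\emph{Part (1).} Since $\square(X,Y)<\varepsilon$, there are $\pi\in\Pi(\mu_X,\mu_Y)$ and a Borel set $S\subset X\times Y$ with $\dis(S)<\varepsilon$ and $\pi(S)>1-\varepsilon$.

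First, $X\times Y$ is complete separable, so $\pi$ is inner regular. I take a compact $K\subset S$ with $\pi(K)>1-\varepsilon$. Put $\tilde X\coloneqq p_X(K)$, which is compact, hence Borel. Then $\mu_X(\tilde X)=\pi(p_X^{-1}(\tilde X))\ge\pi(K)>1-\varepsilon$.

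Next I need a Borel map $f\colon\tilde X\to Y$ with $(x,f(x))\in K$ for every $x\in\tilde X$. The multifunction $x\mapsto K_x=\{y\mid (x,y)\in K\}$ has nonempty compact values and closed graph $K$, so it is upper semicontinuous. By the Kuratowski--Ryll-Nardzewski selection theorem it admits a Borel selection. I extend $f$ to all of $X$ by a constant $y_0\in Y$ outside $\tilde X$. This selection step is the only genuinely technical point, and it is where I expect the main obstacle. The alternative is to cover $Y$ by countably many Borel sets of small diameter and choose piecewise; the selection theorem is cleaner.

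Now I check the three conditions for $f$.
\begin{itemize}
\item[(a)] For $x_1,x_2\in\tilde X$ we have $(x_i,f(x_i))\in K\subset S$, so $|d_X(x_1,x_2)-d_Y(f(x_1),f(x_2))|\le\dis(S)<\varepsilon$.
\item[(b)] For the Prohorov bound, let $A\subset Y$ be Borel. Then $\mu_Y(A)=\pi(X\times A)\le\pi(K\cap(X\times A))+\varepsilon$. If $(x,y)\in K$ with $y\in A$, then $(x,f(x))\in K$ as well. Applying the distortion bound to these two points gives $d_Y(y,f(x))\le d_X(x,x)+\dis(S)<\varepsilon$, so $f(x)\in U_\varepsilon(A)$.
\item[(c)] Hence $\mu_Y(A)\le\mu_X(f^{-1}(U_\varepsilon(A)))+\varepsilon=f_*\mu_X(U_\varepsilon(A))+\varepsilon$. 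This gives $d_{\mathrm P}(f_*\mu_X,\mu_Y)\le\varepsilon$.
\end{itemize}
So $f$ is an $\varepsilon$-mm-isomorphism, and a fortiori a $3\varepsilon$-mm-isomorphism.

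\emph{Part (2).} Let $f$ be an $\varepsilon$-mm-isomorphism with non-exceptional domain $\tilde X$. Consider the intermediate space $Y'\coloneqq(Y,f_*\mu_X)$, taken up to mm-isomorphism, i.e.\ restricted to the support of $f_*\mu_X$. By Theorem \ref{thm: box distance complete separable}, $\square$ is a metric, so
\[
\square(X,Y)\le\square(X,Y')+\square(Y',Y).
\]
For the second term, Proposition \ref{prop: box and prokhorov} gives $\square(Y',Y)\le 2d_{\mathrm P}(f_*\mu_X,\mu_Y)\le2\varepsilon$.

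For the first term, take $\pi\coloneqq(\mathrm{id}_X,f)_*\mu_X\in\Pi(\mu_X,f_*\mu_X)$. Take $S\coloneqq\{(x,f(x))\mid x\in\tilde X\}$, which is Borel because $f$ is a Borel map between Polish spaces. Then $\pi(S)=\mu_X(\tilde X)\ge1-\varepsilon$, and $\dis(S)\le\varepsilon$ by the second condition on $f$. Hence $\square(X,Y')\le\varepsilon$.

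Altogether $\square(X,Y)\le3\varepsilon$. The only care needed here is the routine passage between $(Y,f_*\mu_X)$ and its support, which does not change its mm-isomorphism class.
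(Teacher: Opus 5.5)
The paper does not prove this proposition; it is imported from the literature (Shioya's monograph, referred to at the start of Section~2). There the box distance is set up via parametrizations of $[0,1)$, whereas this paper uses transport plans.

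Your argument is correct and works directly with the transport-plan definition the paper actually uses. Part~(2) is the standard route. You go through the intermediate space $(\supp f_*\mu_X, f_*\mu_X)$ and use the graph plan $(\mathrm{id}_X,f)_*\mu_X$, with $S$ the graph of $f$ over $\tilde X$, to get $\square\le\varepsilon$. The Prohorov comparison supplies the remaining $2\varepsilon$.

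In part~(1), your two technical ingredients are the right ones:
\begin{enumerate}
\item Inner regularity of $\pi$ on the Polish space $X\times Y$ lets you replace $S$ by a compact $K$, so that $\tilde X=p_X(K)$ is compact. For a general Borel $S$ the projection would only be analytic.
\item The measurability hypothesis of the Kuratowski--Ryll-Nardzewski theorem holds, because $\{x\mid K_x\cap F\neq\emptyset\}=p_X(K\cap(X\times F))$ is compact for every closed $F\subset Y$.
\end{enumerate}
The geometric heart of the argument is that $\dis(S)<\varepsilon$ forces each fiber $K_x$ to have diameter $<\varepsilon$. This gives the distortion bound and the Prohorov bound at the same time.

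You in fact obtain an $\varepsilon$-mm-isomorphism, so in this formulation the constant in (1) improves from $3\varepsilon$ to $\varepsilon$. The stated version still follows, since the defining conditions are monotone in $\varepsilon$. The alternative you mention also works: choose piecewise over a countable Borel partition of $Y$ into sets of diameter $<\eta$. This costs an additive loss of order $\eta$, which your strict inequalities $\dis(S)<\varepsilon$ and $\pi(S)>1-\varepsilon$ absorb.
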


\subsection{Measured Gromov--Hausdorff convergence}	

	In this subsection, we discuss measured Gromov--Hausdorff convergence, which is a notion of convergence for compact mm-spaces. 
	
	\begin{dfn}[$\varepsilon$-isometry]
		Let $\varepsilon \geq 0$ and $(X, d_X)$, $(Y, d_Y)$ be metric spaces. 
		We say that a map $f \colon X \to Y$ is an \textit{$\varepsilon$-isometry} if it satisfies  
		\[
			\sup_{x, x^\prime \in X} |d_X (x, x^\prime) - d_Y (f (x), f (x^\prime))| \leq \varepsilon
		\]
		and $Y = B_{\varepsilon} (f (X)) \coloneqq \{ y \in Y \mid d_Y (y, f(X)) \leq \varepsilon \}$.
	\end{dfn}

	\begin{dfn}[Measured Gromov--Hausdorff convergence] \label{dfn: mgh convergence}
		We say that a sequence of compact mm-spaces $\{ X_n \}_{n \in \mathbb{N}}$ \textit{measured Gromov--Hausdorff converges} to a compact mm-space $X$  if there exist a sequence of positive real numbers $\{ \varepsilon_n \}_{n \in \mathbb{N}}$ with $\varepsilon_n \to 0$ as $n \to \infty$ and Borel measurable $\varepsilon_n$-isometries $f_n \colon X_n \to X$ such that $\{ (f_n)_* \mu_{X_n} \}_{n \in \mathbb{N}}$ converges weakly to $\mu_X$. 
	\end{dfn}
	
	Proposition \ref{prop: mgh and box} follows immediately from Proposition \ref{prop: box distance and epsilon-mm-ismom} and Definition \ref{dfn: mgh convergence}. 
	
\section{Proof of the main theorems}
\subsection{Condition U($D$)}

\begin{lem} \label{lem: esssup and sup}
	Let $X$ be an mm-space and $f \colon X \to [0, + \infty]$ be a Borel measurable map. 
	Then there exist Borel sets $N_1, N_2 \subset X$ such that $\mu_X (N_1) = \mu_X (N_2)= 0$ and 
	\begin{align}
		\label{eq: lem: esssup and sup}
		\esssup f &= \sup_{x \in X \setminus N_1} f (x), 
		\\
		\label{eq: lem: essinf and inf}
		\essinf f & = \inf_{x \in X \setminus N_2} f (x). 
	\end{align}
\end{lem}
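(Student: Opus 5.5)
I will prove \eqref{eq: lem: esssup and sup} directly from the definition of the essential supremum, $\esssup f = \inf\{ t \in [0,+\infty] \mid \mu_X(\{ f > t\}) = 0 \}$; \eqref{eq: lem: essinf and inf} is entirely symmetric, with $\essinf f = \sup\{ t \mid \mu_X(\{ f < t\}) = 0\}$. Put $M \coloneqq \esssup f \in [0,+\infty]$.

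\emph{Construction of $N_1$.} If $M = +\infty$, take $N_1 = \emptyset$. Then $\sup_{X} f \geq M$ holds trivially, and $\sup_X f \le +\infty = M$, so equality holds. If $M < +\infty$, set
\[
N_1 \coloneqq \{ x \in X \mid f(x) > M \} = \bigcup_{k \in \mathbb{N}} \{ x \in X \mid f(x) > M + 1/k \}.
\]
This set is Borel because $f$ is Borel measurable. Each set in the union has measure zero by the definition of $M$ as an infimum: for every $k$ there is some $t < M + 1/k$ with $\mu_X(\{f > t\}) = 0$, and $\{f > M+1/k\} \subset \{f > t\}$. By countable subadditivity, $\mu_X(N_1) = 0$.

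\emph{Verifying the equality.} By construction, $\sup_{X \setminus N_1} f \le M$. For the reverse inequality, suppose that $s \coloneqq \sup_{X\setminus N_1} f < M$. Then $\{ f > s\} \subset N_1$, so $\mu_X(\{f>s\}) = 0$. This forces $M \le s$, a contradiction. Hence equality holds.

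One point needs care. If $X \setminus N_1$ were empty, the supremum would be $0$ (or $-\infty$), which could make the argument degenerate. Since $\mu_X(X) = 1$ and $\mu_X(N_1) = 0$, the set $X\setminus N_1$ is nonempty, so this case does not arise. The essinf case works the same way: take $N_2 = \{ f < m\}$ with $m = \essinf f$, written as the countable union of the null sets $\{f < m - 1/k\}$ (with $N_2 = \emptyset$ when $m = 0$). There is no real obstacle here; the only subtle points are writing the null set as a countable union and handling the infinite or boundary values.
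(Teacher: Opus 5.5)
Your proof is correct and is essentially the paper's argument: the same null set $N_1=\{f>M\}$ written as the countable union of the sets $\{f>M+1/k\}$, the same separate case $M=+\infty$ with $N_1=\emptyset$, and the same reverse inequality (you argue by contradiction, while the paper picks a point of $\{f>\alpha-\varepsilon\}\setminus N_1$ for each $\varepsilon>0$). One small boundary remark: since $f$ takes values in $[0,+\infty]$, the essinf case is not literally symmetric, and when $m=+\infty$ you should write $N_2=\{f<+\infty\}=\bigcup_{k}\{f<k\}$ instead of using the sets $\{f<m-1/k\}$.
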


\begin{proof}
	First, we show \eqref{eq: lem: esssup and sup}. 
	\eqref{eq: lem: essinf and inf} may be proved in much the same way. 
	Let $\alpha \coloneqq \esssup f = \inf \{ a \in \mathbb{R} \mid \mu_X (f > a) = 0 \}$. 
	If $\alpha = + \infty$, it is clear that $\sup_{x \in X \setminus N_1} f (x) = + \infty$ by setting $N_1 = \emptyset$. 	We consider the case $\alpha < + \infty$. 
	Let $N_1 \coloneqq \{ f > \alpha \}$. 
	Since $\{ f > \alpha \} = \bigcup_{n \in \mathbb{N}} \{ f > \alpha + n^{-1} \}$ and $\mu_X (f > \alpha + n^{-1}) = 0$, we see $\mu_X (N_1) = 0$. 
	Therefore we obtain $\sup_{x \in X \setminus N_1} f (x) \leq \alpha$ because $f (x) \leq \alpha$ whenever $x \in X \setminus N_1$. 
	
	Next, we prove the other inequality. 
	Fix $\varepsilon > 0$. 
	We note that $\mu_X ( \{ f > \alpha - \varepsilon \}) > 0$. 
	$ \{ f > \alpha - \varepsilon \} \setminus N_1$ is nonempty because $\mu_X ( \{ f > \alpha - \varepsilon \} \setminus N_1) = \mu_X ( \{ f > \alpha - \varepsilon \}) - \mu_X (N_1) > 0$. 
	Taking $x \in  \{ f > \alpha - \varepsilon \} \setminus N_1 \subset X \setminus N_1$, we have $f (x) > \alpha - \varepsilon$. 
	This yields $\sup_{x \in X \setminus N_1} f (x) > \alpha - \varepsilon$. 
	Since $\varepsilon > 0$ is arbitrary, we find $\sup_{x \in X \setminus N_1} f (x) \geq \alpha$. 
	This completes the proof.
\end{proof}

For an mm-space $X$, we note that the map
\[
	X \ni x \mapsto \liminf_{r \to 0} \frac{\log \mu_X (B_r (x))}{\log r} \in [0, + \infty]
\]
is a Borel measurable map. 

\begin{lem}
	Let $X$ be an mm-space and $x \in X$. 
	Then we have
	\[
		\liminf_{r \to 0} \frac{\log \mu_X (B_r (x))}{\log r} = \liminf_{r \to 0} \frac{\log \mu_X (U_r (x))}{\log r}. 
	\]
\end{lem}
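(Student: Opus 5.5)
The plan is to compare the closed ball $B_r(x)$ with the open balls $U_s(x)$ for radii close to $r$ via the inclusions
\[
	U_r(x) \subset B_r(x) \subset U_{s}(x) \quad (0<r<s),
\]
and to use that $\log r / \log s \to 1$ when $s$ is a fixed multiple of $r$ and $r \to 0$. Throughout, $r<1$, so that $\log r<0$, and $\mu_X(B_r(x))>0$ because $X=\supp \mu_X$. Also $U_r(x)$ is nonempty for $r>0$, so $\mu_X(U_r(x))>0$, and all quotients are well-defined elements of $[0,+\infty]$.

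For the inequality ``$\le$'': since $U_r(x)\subset B_r(x)$, we have $\log\mu_X(B_r(x)) \geq \log\mu_X(U_r(x))$. Dividing by the negative number $\log r$ reverses this, giving
\[
	\frac{\log \mu_X (B_r (x))}{\log r} \le \frac{\log \mu_X (U_r (x))}{\log r}.
\]
Taking $\liminf_{r\to0}$ yields the inequality directly.

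For ``$\ge$'': we use $B_r(x)\subset U_{2r}(x)$. This gives
\[
	\frac{\log \mu_X (B_r (x))}{\log r} \ge \frac{\log \mu_X (U_{2r} (x))}{\log r} = \frac{\log \mu_X (U_{2r} (x))}{\log (2r)}\cdot\frac{\log(2r)}{\log r}.
\]
As $r\to0$, the factor $\log(2r)/\log r = 1+\log 2/\log r$ tends to $1$. The first factor is nonnegative, and its $\liminf$ as $r\to0$ equals $\liminf_{s\to0}\log\mu_X(U_s(x))/\log s$ via the substitution $s=2r$. Taking $\liminf$ therefore gives the reverse inequality.

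The only point needing care is the product of limits when the $\liminf$ is $+\infty$ or $0$. Because the second factor converges to the positive number $1$ and the first is nonnegative, $\liminf(a_r b_r)=\liminf a_r \cdot \lim b_r$ holds in $[0,+\infty]$. So no genuine obstacle arises; this bookkeeping with infinite values is the step I would check most carefully.
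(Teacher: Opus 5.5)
Your proposal is correct and follows essentially the same argument as the paper: the inequality $\le$ from $U_r(x)\subset B_r(x)$, and $\ge$ from $B_r(x)\subset U_{2r}(x)$ combined with the factor $\log(2r)/\log r\to 1$. Your explicit handling of the product of $\liminf$ with a factor converging to $1$, allowing the value $+\infty$, is a welcome detail that the paper leaves implicit.
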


\begin{proof}
	It is clear that the inequality $\leq$ holds. 
	We show the other inequality. 
	Fix $0 < r < 1/2$. 
	Let $r^\prime = 2r$. 
	Using $\log \mu_X (B_r (x)) / \log r \geq \log \mu_X (U_{2r} (x)) / \log r$, we have
	\[
		\frac{\log \mu_X (B_r (x))}{\log r} \geq \frac{\log \mu_X (U_{r^\prime} (x))}{\log r^{\prime}} \cdot \frac{\log r^\prime}{\log r^{\prime} - \log 2}. 
	\]
	Letting $r \to 0$,  we obtain $r^\prime \to 0$ and $\liminf_{r \to 0} \log \mu_X (B_r (x)) / \log r \geq \liminf_{r \to 0} \log \mu_X (U_r (x)) / \log r $. 
	This completes the proof. 
\end{proof}

This immediately implies the following.

\begin{cor} \label{cor: another definition of essldimH }
	Let $X$ be an mm-space. Then we have
	\[
		\essldimH (\mu_X) = \essinf \displaylimits_{x \in X} \liminf_{r \to 0}  \frac{\log \mu_X (U_r (x))}{\log r}. 
	\]
\end{cor}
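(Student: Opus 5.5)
The corollary is a direct consequence of the preceding lemma, which says that for every $x \in X$,
\[
	\liminf_{r \to 0} \frac{\log \mu_X (B_r (x))}{\log r} = \liminf_{r \to 0} \frac{\log \mu_X (U_r (x))}{\log r}.
\]
So the plan is to observe that the two functions
\[
	g_B (x) \coloneqq \liminf_{r \to 0} \frac{\log \mu_X (B_r (x))}{\log r}, \qquad g_U (x) \coloneqq \liminf_{r \to 0} \frac{\log \mu_X (U_r (x))}{\log r}
\]
are not merely equal $\mu_X$-almost everywhere but equal at \emph{every} point of $X$. Hence they are the same map $X \to [0, +\infty]$, and their essential infima with respect to $\mu_X$ coincide. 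By definition, $\essldimH (\mu_X) = \essinf g_B$, which gives the claim.

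There is one technical point. The essential infimum $\essinf g = \sup \{ a \mid \mu_X (g < a) = 0 \}$ requires the level sets $\{ g < a \}$ to be $\mu_X$-measurable. The paper already noted that $g_B$ is Borel measurable. Since $g_U = g_B$ pointwise, $g_U$ is automatically Borel measurable, so no separate measurability argument is needed.

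I do not expect any real obstacle here. The only care needed is to make sure we use the pointwise identity from the lemma, rather than merely an a.e. statement, so that the sets $\{ g_U < a \}$ and $\{ g_B < a \}$ coincide exactly for every $a$.
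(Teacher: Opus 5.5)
Your proposal is correct and matches the paper, which states that the corollary follows immediately from the preceding lemma: that lemma's identity holds at every $x \in X$, so the $B_r$- and $U_r$-versions of the local dimension are the same function and have the same essential infimum. Your remark that Borel measurability of the $U_r$-version follows from that of the $B_r$-version via this pointwise equality is a harmless detail the paper leaves implicit.
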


\begin{proof}[Proof of Theorem \ref{thm: lowersemi, box}]
	First, we show \eqref{thm: eq: U, essudim, box, lower}. 	
	Since \eqref{thm: eq: U, udim, box, lower} can be proved in the same way, we omit its proof.	
	By Lemma \ref{lem: esssup and sup} and the condition U($D$), we can choose Borel sets $N_n \subset X_n$ and $N \subset X$ satisfying 
	\begin{align*}
		\essudimH (\mu_{X_n}) &
		= \sup_{y_n \in X_n \setminus N_n} \sup_{0 < r < D} \frac{\log \mu_{X_n} (B_r (y_n))}{\log r}, 
		\
		\\
		\essudimH (\mu_X) & =
		\sup_{y \in X \setminus N} \liminf_{r \to 0} \frac{\log \mu_X (B_r (y))}{\log r}.
	\end{align*}
	
	Fix $x \in X \setminus N$, $\varepsilon > 0$, and $0 < r < D$. 
	Since $\{ X_n \}_{n \in \mathbb{N}}$ box converges to $X$ and Lemma \ref{prop: box converge and embedding} holds, 
	$X$ and $X_n$, $n \in \mathbb{N}$ are isometrically embedded into some complete separable metric space $Z$ 
	and
	there exist $\bar{x}_n \in X_n$ and $N \in \mathbb{N}$ such that $d_{\mathrm{P}} (\mu_{X_n}, \mu_X)< \varepsilon$ and $d_Z (\bar{x}_n, x) < \varepsilon /2$ for each $n \geq N$.  
	As $X_n \setminus N_n$ is dense in $X_n$, we can find $x_n \in X_n \setminus N_n$ with $d_{X_n} (x_n, \bar{x}_n) < \varepsilon / 2$. 
	Hence we get $d_Z (x_n, x) < \varepsilon$. 
	For $n \geq N$, we have
	\[
	 	\mu_X (B_{r + 2 \varepsilon} (x)) \geq \mu_{X_n} (B_r (x_n)) - \varepsilon. 
	\]
	Therefore we obtain
	 \begin{align*}
		\frac{\log (\mu_X (B_{r + 2 \varepsilon} (x)) + \varepsilon)}{\log r} & \leq \frac{\log \mu_{X_n} (B_r (x_n))}{\log r}
		\leq
		\sup_{0 < r < D}
		\frac{\log \mu_{X_n} (B_r (x_n))}{\log r} \\
		&\leq
		\sup_{x_n \in X_n \setminus N_n} \sup_{0 < r < D}
		\frac{\log \mu_{X_n} (B_r (x_n))}{\log r}
		=
		\essudimH (\mu_{X_n}). 
	\end{align*}
	Letting $n \to \infty$ and then $\varepsilon \to 0$, we see
	\[
		\frac{\log \mu_X (B_r (x))}{\log r}
		\leq
		\liminf_{n \to \infty} \essudimH (\mu_{X_n}). 
	\]
	Since $x \in X \setminus N$ is arbitrary, we have \eqref{thm: eq: U, essudim, box, lower}. 
	
	Next, we prove \eqref{eq: thm: U, ldimH, box, lower}.  
	Without loss of generality, we may assume that 
	\[
		a \coloneqq \liminf_{n \to \infty} \ldimH (\mu_{X_n}) = \lim_{n \to \infty} \ldimH (\mu_{X_n}) < + \infty
	\]
	 and $\ldimH (\mu_{X_n})$ is finite. 
	For simplicity, we set $\alpha_n \coloneqq \ldimH (\mu_{X_n})$. 
	It suffices to show that there exists $x \in X$ such that
	\begin{align} \label{eq: thm: U, ldimH, box, lower-4}
		\frac{\log \mu_X (U_r (x))}{\log r} \leq \liminf_{n \to \infty} \ldimH (\mu_{X_n})
	\end{align}
	for $0 < r < D$. 
	As $\{ X_n \}_{n \in \mathbb{N}}$ box converges to $X$, we choose a sequence of positive numbers $\{ \varepsilon_n \}_{n \in \mathbb{N}}$ with $\varepsilon_n \to 0$ and an $\varepsilon_n$-mm isomorphism $f_n \colon X_n \to X$. 
	We write $\tilde{X}_n \subset X_n$ for a non-exceptional domain of $f_n$. 
	By taking $n \in \mathbb{N}$ large enough, we may assume that $\beta_n \coloneqq  \varepsilon_n^{\frac{1}{\alpha_n + \varepsilon_n + 1}} < D$. 
	
	\begin{claim} \label{claim: U, essldimH, lower-1}
		For $x_n \in X_n \setminus B_{\beta_n} (\tilde{X}_n)$, we have
		\[
			\alpha_n + \varepsilon_n \leq \sup_{0 < r < D} \frac{\log \mu_{X_n} (B_r (x_n))}{\log r}. 
		\]
	\end{claim}
	
	\begin{proof}[Proof of Claim \ref{claim: U, essldimH, lower-1}]
		Fix $x_n \in X_n \setminus B_{\beta_n} (\tilde{X}_n)$. 
		Since $B_{\beta_n} (x_n) \subset X_n \setminus \tilde{X}_n$ and $\mu_{X_n} (X_n \setminus \tilde{X}_n) \leq \varepsilon_n$, we obtain
		\begin{align*}
			\alpha_n + \varepsilon_n \leq \frac{\log \mu_{X_n} (B_{\beta_n} (x_n))}{\log \beta_n}
			\leq
			\sup_{0 < r < D} \frac{\log \mu_{X_n} (B_r (x_n))}{\log r}. 
		\end{align*}
	\end{proof}
	
	Claim \ref{claim: U, essldimH, lower-1} and Lemma \ref{lem: esssup and sup} imply that there exists $x_n \in B_{\beta_n} (\tilde{X}_n)$ satisfying
	\begin{align} \label{eq: thm: U, essdimH, lower, box-1}
		\alpha_n \leq \sup_{0 < r < D} \frac{\log \mu_{X_n} (B_r (x_n))}{\log r} < \alpha_n + \varepsilon_n . 
	\end{align}
	We define $z_n \in X_n$ as follows: 
	If $x_n \in \tilde{X}_n$, let $z_n \coloneqq x_n$. 
	Otherwise, there exists $x^\prime_n \in \tilde{X}_n$ with $d_{X_n} (x^\prime_n, x_n) < 2 \beta_n$.
	We set $z_n \coloneqq x^\prime_n$. 

	Take $0 < r < D$ and $0 < \varepsilon < r$ arbitrarily. 
	There exists $N (r, \varepsilon) \in \mathbb{N}$ such that $|\alpha_n - a| < \varepsilon /2$ and $2 (\beta_n +  \varepsilon_n) < \min \{\varepsilon, (r- \varepsilon)^{a + \varepsilon} \}$ for every $n \geq N (r, \varepsilon)$. 
	For $n \geq N(r, \varepsilon)$,  we obtain
	\begin{align}
		\mu_{X_n} (B_{r-2 \beta_n - 2 \varepsilon_n} (x_n))
		&
		\leq \mu_{X_n} (B_{r- 2 \varepsilon_n} (z_n)) \nonumber \\
		&\leq \mu_{X_n} (B_{r - 2 \varepsilon_n } (z_n) \cap \tilde{X}_n) + \mu_{X_n} (X_n \setminus \tilde{X}_n) \nonumber
		\\
		&
		\leq \mu_{X_n} (B_{r- 2 \varepsilon_n} (z_n) \cap \tilde{X}_n) +  \varepsilon_n. \nonumber
		\intertext{Since $B_{r- 2 \varepsilon_n} (z_n) \cap \tilde{X}_n \subset f_n^{-1} (B_{r - \varepsilon_n} (f_n (z_n)))$ and $d_{\mathrm{P}} ((f_n)_*\mu_{X_n}, \mu_X) \leq \varepsilon_n$, we have}
		&
		\leq (f_n)_* \mu_{X_n} (B_{r - \varepsilon_n} (f_n (z_n))) +  \varepsilon_n \nonumber
		\\
		&
		\label{eq: thm: U, essdimH, lower, box-2}
		\leq \mu_X (B_r (f_n (z_n))) + 2 \varepsilon_n. 
	\end{align}
	Combining \eqref{eq: thm: U, essdimH, lower, box-1} and \eqref{eq: thm: U, essdimH, lower, box-2}, we see 
	\begin{align} \label{eq: thm: U, essldim, box, lower-3}
		0 < (r - \varepsilon)^{a + \varepsilon} - 2 \varepsilon_n \leq \mu_X (B_r (f_n (z_n)))
	\end{align}
	for $n \geq N(r, \varepsilon)$. 
	
	\begin{claim} \label{claim: U, essldimH, lower-2}
		$\{ f_n (z_n) \}_{n \in \mathbb{N}}$ has a convergent subsequence. 
	\end{claim}
	
	\begin{proof}[Proof of Claim \ref{claim: U, essldimH, lower-2}]
		Suppose that $\{ f_n (z_n)\}_{n \in \mathbb{N}}$ has no convergent subsequence. 
		This means that $\{ f_n (z_n) \}_{n \in \mathbb{N}}$ is not a totally bounded set. 
		Therefore there exist $\delta > 0$ and a subsequence $\{ f_{n (k)} (z_{n (k)}) \}_{k \in \mathbb{N}}$ of $\{ f_n (z_n) \}_{n \in \mathbb{N}}$ with $d_X (f_{n(k)} (z_{n(k)}), f_{n (l)} (z_{n (l)})) \geq \delta$ for all distinct numbers $k, l \in \mathbb{N}$. 
		For $0 < r < \min \{ D, \delta /3\}$ and $0 < \varepsilon < r$, we see that \eqref{eq: thm: U, essldim, box, lower-3} holds whenever $n \geq N(r, \varepsilon)$. 
		Since $\varepsilon_n \to 0$, the inequality
		\[
			2 \varepsilon_n < \frac{1}{2} (r - \varepsilon)^{a + \varepsilon}
		\]
		holds for all sufficiently large $n \in \mathbb{N}$. 
		Therefore we obtain
		\[
			\mu_X (B_r (f_n(k)(z_{n(k)}))) \geq \frac{1}{2} (r - \varepsilon)^{a + \varepsilon} > 0
		\]
		for all sufficiently large $k \in \mathbb{N}$. 
		For all sufficiently large $k, l \in \mathbb{N}$ with $k \neq l$, we note that $B_r (f_{n(k)} (z_{n (k)}))$ and  $B_r (f_{n(l)} (z_{n (l)}))$ are disjoint. 
		This contradicts the fact that $\mu_X$ is a Borel probability measure. 
	\end{proof}

	There is no loss of generality in assuming $\{f_n (z_n) \}_{n \in \mathbb{N}}$ converges to some $x \in X$. 
	Take $n \in \mathbb{N}$ large enough to satisfy $d_X (f_n (z_n), x) < \varepsilon/ 2$. 
	Replacing $r - 2 \beta_n- 2 \varepsilon_n$ by $r -2 \beta_n - 2 \varepsilon_n - \varepsilon$ in \eqref{eq: thm: U, essdimH, lower, box-2}, we have 
	\[
		\mu_{X_n} (B_{r - 2 \beta_n - 2 \varepsilon_n - \varepsilon} (x_n)) \leq \mu_X (U_r (x)) + 2 \varepsilon_n. 
	\]
	Therefore, we see that
	\begin{align*}
		\frac{\log (\mu_X (U_r  (x)) + 2 \varepsilon_n )}{\log (r - 2 \beta_n - 2 \varepsilon_n - \varepsilon)} 
		& \leq
		\frac{\log \mu_{X_n} (B_{r - 2\beta_n - 2 \varepsilon_n - \varepsilon} (x_n))}{\log (r - 2 \beta_n - 2 \varepsilon_n - \varepsilon)} \\
		& \leq
		\sup_{0 < r < D} \frac{\log \mu_{X_n} (B_r (x_n))}{\log r} \leq \alpha_n + \varepsilon_n. 
	\end{align*}
	Letting $n \to \infty$ and then $\varepsilon \to 0$ yields \eqref{eq: thm: U, ldimH, box, lower-4}. 
	This completes the proof. 
\end{proof}

\subsection{Condition L($D$)}

In this subsection, we prove Theorems \ref{thm: uppersemi, box} and \ref{thm: uppersemi, mGH}. 
The main idea of these proofs is the same as that of Theorem \ref{thm: lowersemi, box}. 

\begin{proof}[Proof of Theorem \ref{thm: uppersemi, box}]
	Here, we show \eqref{thm: eq: B, esslowerdim, box, upper}. 
	\eqref{thm: eq: B, lowerdim, box, upper} may be proved in much the same way as \eqref{thm: eq: B, esslowerdim, box, upper}. 
	
	By Lemma \ref{lem: esssup and sup} and the condition L($D$), we find Borel sets  $N_n \subset X_n$, $N \subset X$ with $\mu_{X_n} (N_n) = \mu_X (N) = 0$ and
	\begin{align*}
		\essldimH (\mu_{X_n}) &= \inf_{y_n \in X_n \setminus N_n} \inf_{0 < r < D} \frac{\log \mu_{X_n} (B_r (y_n))}{\log r}, 
		\\
		\essldimH (\mu_X) & =
		\inf_{y \in X \setminus N} \liminf_{r \to 0} \frac{\log \mu_X (B_r (y))}{\log r}. 
	\end{align*}
	
	Fix $x \in X \setminus N$, $0 < r < D$, and $0 < \varepsilon < \min \{ \mu_X (B_r (x)), (D - r)/2 \}$. 
	Since $\{ X_n \}_{n \in \mathbb{N}}$ box converges to $X$, Lemma \ref{prop: box converge and embedding} implies that  
	$X$ and $X_n$, $n \in \mathbb{N}$ are isometrically embedded into some complete separable metric space $Z$
	and there exist $\bar{x}_n \in X_n$ and $N \in \mathbb{N}$ such that  $d_Z (\bar{x}_n, x) < \varepsilon /2$, $d_{\mathrm{P}} (\mu_{X_n}, \mu_X)< \varepsilon$ for $ n \geq N$. 
	As $X_n \setminus N_n$ is dense in  $X_n$, we find $x_n \in X_n \setminus N_n$ with $d_{X_n} (x_n, \bar{x}_n) < \varepsilon / 2$. 
	Thus, we have $d_Z (x_n, x) < \varepsilon$. 
	These inequalities show that 
	\[
		\mu_{X_n} (B_{r + 2 \varepsilon} (x_n)) \geq \mu_X (B_r (x)) - \varepsilon > 0
	\]
	for $n \geq N$. 
	Using the condition L($D$), we obtain 
	\begin{align*}
		\frac{\log \mu_X (B_r (x))}{\log (r + 2 \varepsilon)} 
		&\geq
		 \frac{\log (\mu_{X_n} (B_{r + 2 \varepsilon} (x_n))+ \varepsilon)}{\log (r + 2 \varepsilon)}
		\\
		&\geq
		\frac{\log \mu_{X_n} (B_{r + 2 \varepsilon} (x_n))}{\log (r + 2 \varepsilon)} 
		+
		\frac{\log \left (1 + \frac{\varepsilon}{\mu_{X_n} (B_{r + 2 \varepsilon} (x_n))} \right)}{\log (r + 2 \varepsilon)}
		\\
		& \geq
		\essldimH (\mu_{X_n}) + \frac{\log \left (1 + \frac{\varepsilon}{\mu_X (B_r (x)) - \varepsilon} \right)}{\log (r + 2 \varepsilon)}. 
	\end{align*}
	Letting $n \to \infty$ and then $\varepsilon \to 0$ gives 
	\[
		\frac{\log \mu_X (B_r (x))}{\log r} \geq \limsup_{n \to \infty} \essldimH (\mu_{X_n}). 
	\]
	This completes the proof by taking $r \to 0$. 
\end{proof}

\begin{proof}[Proof of Theorem \ref{thm: uppersemi, mGH}]
	First, we show \eqref{thm: eq: B, udimH, mGH, upper}. 
	By the assumption, we choose a sequence of positive real numbers $\{ \varepsilon_n \}_{n \in \mathbb{N}}$ with $\varepsilon_n \to 0$ and Borel measurable $\varepsilon_n$-isometries $f_n \colon X_n \to X$ such that $(f_n)_* \mu_{X_n}$ converges weakly to $\mu_X$. 
	If $\limsup_{n \to \infty} \udimH (\mu_{X_n}) < + \infty$, we may assume without loss of generality that $\limsup_{n \to \infty} \udimH (\mu_{X_n}) = \lim_{n \to \infty} \udimH (\mu_{X_n})$ and $\udimH (\mu_{X_n}) < + \infty$ for each $n \in \mathbb{N}$.
	For $n \in \mathbb{N}$, there exists $x_n \in X_n$ with
	\[
		- \varepsilon_n + \udimH (\mu_{X_n}) \leq \inf_{0 < r < D} \frac{\log \mu_{X_n} (B_r (x_n))}{\log r}. 
	\]
	Since $X$ is compact, $\{ f_n (x_n) \}_{n \in \mathbb{N}}$ has a convergent subsequence. 
	Passing to a subsequence, we may assume that $\{ f_n (x_n) \}_{n \in \mathbb{N}}$ converges to some $x \in X$. 
	
	Fix $0 < r < D$ and $0 < \varepsilon < \min \{ \mu_{X} (B_r (x)) , (D-r)/ 2 \}$. 
	There exists $N \in \mathbb{N}$ such that $\varepsilon_n + 2\varepsilon + r < D$, $d_X (x, f_n (x_n)) < \varepsilon$, and $d_{\mathrm{P}} ((f_n)_* \mu_{X_n}, \mu_X) < \varepsilon$ whenever $n \geq N$. 
	These imply that 
	\begin{align*}
		(f_n)_* \mu_{X_n} (B_{r + 2 \varepsilon} (f_n (x_n)))
		&\geq
		\mu_X (B_{r + \varepsilon} (f_n (x_n))) - \varepsilon \\
		&\geq
		\mu_X (B_r (x)) - \varepsilon
	\end{align*}
	for $n \geq N$. 
	Since $f_n^{-1} (B_{r + 2 \varepsilon} (f_n (x_n))) \subset B_{r + 2\varepsilon + \varepsilon_n} (x_n)$, we see
	\[
		\mu_{X_n} (B_{r + 2 \varepsilon + \varepsilon_n} (x_n)) 
		\geq
		\mu_X (B_r (x)) - \varepsilon > 0. 
	\]
	We obtain 
	\begin{align*}
		&\frac{\log \mu_{X} (B_r (x))}{\log (r + 2 \varepsilon + \varepsilon_n)}
		\geq
		\frac{\log ( \mu_{X_n} (B_{r + 2 \varepsilon + \varepsilon_n} (x_n)) + \varepsilon )}{\log (r + 2 \varepsilon + \varepsilon_n)}
		\\
		&= \frac{\log \mu_{X_n} (B_{r + 2 \varepsilon + \varepsilon_n} (x_n))}{\log (r + 2 \varepsilon + \varepsilon_n)}
		+ 
		\frac{\log \left( 1 + \frac{\varepsilon}{\mu_{X_n}(B_{r + 2 \varepsilon + \varepsilon_n} (x_n))}\right)}{\log (r + 2 \varepsilon + \varepsilon_n)}
		\\
		&
		\geq
		\inf_{0 < r < D} \frac{\log \mu_{X_n} (B_r (x_n))}{\log r} +\frac{\log \left( 1 + \frac{\varepsilon}{\mu_X(B_r (x)) - \varepsilon}\right)}{\log (r + 2 \varepsilon + \varepsilon_n)} \\
		&
		\geq
		- \varepsilon_n + \udimH  (\mu_{X_n})
		+
		\frac{\log \left( 1 + \frac{\varepsilon}{\mu_X(B_r (x)) - \varepsilon}\right)}{\log (r + 2 \varepsilon + \varepsilon_n)}. 
	\end{align*}
	Letting $n \to \infty$ and then $\varepsilon \to 0$, we see
	\[
		\limsup_{n \to \infty} \udimH (\mu_{X_n}) \leq \frac{\log \mu_X (B_r (x))}{\log r}. 
	\]
	Letting $r \to 0$ yields \eqref{thm: eq: B, udimH, mGH, upper}. 
		
	Next, we show \eqref{thm: eq: B, udimH, mGH, upper} if $\limsup_{n \to \infty} \udimH (\mu_{X_n}) = + \infty$. 
	Fix $R > 0$. 
	Passing to a subsequence, we assume that for every $n \in \mathbb{N}$, there exists $x_n \in X_n$ such that
	\[
		\liminf_{r \to 0} \frac{\log \mu_{X_n} (B_r (x_n))}{\log r} \geq R. 
	\]	
	By a similar argument, we obtain $R \leq \udimH (\mu_X)$. 
	Since $R > 0$ is arbitrary, we have $\udimH (\mu_X) = + \infty$. 
	This completes the proof. 
\end{proof}

\section{Examples} \label{sec: example}
\subsection{Condition U($D$)}

The following example provides a sequence of mm-spaces satisfying the condition U($D$). 
\begin{ex}
	Let $F_1 \colon [0, 1] \to [0, 1], x \mapsto x/3$ and $F_2 \colon [0, 1] \to [0, 1], x \mapsto x/3 + 2/3$. 
	For $n \in \mathbb{N}$, we set $I_n \coloneqq \{ (i_1, \ldots, i_n) \mid i_j = 1, 2, j = 1, \ldots, n \}$ and
	\[
		X_n \coloneqq \bigcup_{(i_1, \ldots, i_n) \in I_n} (F_{i_1} \circ \cdots \circ F_{i_n}) ([0, 1]). 
	\]
	We define a Borel probability measure $\mu_{X_n}$ on $X_n$ by 
	\[
		\mu_{X_n}  \coloneqq \left(\frac{3}{2}\right)^n \sum_{(i_1, \ldots, i_n) \in I_n} \mathcal{L}^1|_{(F_{i_1} \circ \cdots \circ F_{i_n}) ([0, 1])}, 
	\]
	where $ \mathcal{L}^1|_{(F_{i_1} \circ \cdots \circ F_{i_n}) ([0, 1])}$ denotes the restriction of the one-dimensional Lebesgue measure to $(F_{i_1} \circ \cdots \circ F_{i_n})([0, 1])$. 
	Then $(X_n, \mu_{X_n})$ is an mm-space. 
	By {\cite[Theorem 2.8]{falconer1997techniques}},  $\{ \mu_{X_n}\}_{n \in \mathbb{N}}$ converges weakly to some Borel probability measure $\mu_C$ on the Cantor set $C$. 
	It follows from {\cite[Theorem 2.6]{falconer1997techniques}} that $ \{ X_n \}_{n \in \mathbb{N}}$ Gromov--Hausdorff converges to the Cantor set. 
	Therefore, we see that $\{ (X_n, \mu_{X_n}) \}_{n \in \mathbb{N}}$ measured Gromov--Hausdorff converges to $(C, \mu_C)$. 
	
	We show that $\{ (X_n, \mu_{X_n}) \}_{n \in \mathbb{N}, n \geq 10}$ satisfies the condition U($2/3^7$). 
	Fix $n \in \mathbb{N}$ with $n \geq 10$ and $x_n \in X_n$. 
	First, we prove 
	\[
		\lim_{r \to 0} \frac{\log \mu_{X_n} (B_r (x_n))}{ \log r} = 1 , \hspace{0.5cm} \sup_{0 < r < 2/3^7} \frac{\log \mu_{X_n} (B_r (x_n)) } { \log r} = 1.
	\] 
	There exists $(i_1, \ldots, i_n) \in I_n$ with $x_n \in (F_{i_1} \circ \cdots \circ F_{i_n}) ([0, 1])$. 
	We note that $(F_{i_1} \circ \cdots \circ F_{i_n}) ([0, 1])$ is a bounded closed interval. 
	Let $a$ and $b$ be the left endpoint and the right endpoint of $(F_{i_1} \circ \cdots \circ F_{i_n}) ([0, 1])$, respectively. 
	We set $r^\prime \coloneqq \min \{ b-x_n, x_n-a \}$. 
	If $r^{\prime} = 0$ and $0 < r < 3^{-n}$, then we have $\mu_{X_n} (B_r (x_n)) = 3^nr / 2^n$. 
	This implies that
	\[
		\frac{\log \mu_{X_n} (B_r (x_n))}{\log r} = 1 + \frac{\log \frac{3^n}{2^n}}{\log r}. 
	\]
	If $r^\prime > 0$ and $0 < r < \min \{2/ 3^7, r^\prime \}$, then we obtain $\mu_{X_n} (B_r (x_n)) = 3^n r/ 2^{n-1}$. 
	This yields
	\[
		\frac{\log \mu_{X_n} (B_r (x_n))}{\log r} = 1 + \frac{\log \frac{3^n}{2^{n-1}}}{\log r}. 
	\]
	Therefore, it suffices to show that $\log \mu_{X_n} (B_r (x_n)) / \log r \leq 1$ for $0 < r < 2/3^7$. 
	
	\begin{itemize}
		\item If $0 < r \leq 1/ 3^n$, we have $\mu_{X_n} (B_r (x_n)) \geq 3^n r/ 2^n$. 
		Thus we see
		\[
			\frac{\log \mu_{X_n} (B_r (x_n))}{\log r} \leq 1 + \frac{\log \frac{3^n}{2^n}}{\log r} < 1. 
		\]
		\item If $1/ 3^n \leq r < 2/3^{n-1}$, we obtain $\mu_{X_n} (B_r (x_n)) \geq 1/2^n$. 
		As $n \geq 10$, we find $1/ 2^n - 2/3^{n -1} > 0$. 
		These imply
		\[
			\frac{\log \mu_{X_n} (B_r (x_n))}{\log r} \leq \frac{\log \frac{1}{2^n}}{\log \frac{2}{3^{n-1}}} < 1. 
		\]
		\item If $k = 1, 2, \ldots, n-8$ and  $2 / 3^{n - k} \leq r < 2 / 3^{n - (k + 1)}$, then we see that there exists $(i_1, \ldots, i_{n - k}) \in I_{n- k}$ such that
		\[
			x_n \in (F_{i_1} \circ \cdots \circ F_{i_{n - k}}) ([0, 1]) \subset B_r (x_n). 
		\]
		Therefore we have $\mu_{X_n} (B_r (x_n)) \geq 2^k / 2^n$. 
		Since $n \geq 10$, we have $2^k / 2^n - 2 / 3^{n - (k + 1)} > 0$. 
		Hence, we get
		\[
			\frac{\log \mu_{X_n} (B_r (x_n))}{\log r} \leq \frac{\log \frac{2^k}{2^n}}{\log \frac{2}{3^{n-(k +  1)}}} < 1. 
		\]
	\end{itemize}
	Therefore, we conclude that $\{ (X_n, \mu_{X_n}) \}_{n \in \mathbb{N}, n \geq 10}$ satisfies the condition U($2/3^7$). 
\end{ex}

Under the condition U($D$), the following example implies that 
$\udimH$, $\ldimH$, $\essudimH$, and $\essldimH$ are not upper semicontinuous with respect to the measured Gromov--Hausdorff distance and the box distance. 

\begin{ex}
	For $r > 0$, we set $S^1 (r) \coloneqq \{ (x, y) \in \mathbb{R}^2 \mid x^2 + y^2 = r^2 \}$. 
	$S^1 (r)$ has the distance induced by the Riemannian metric and the normalized Riemannian volume measure, so $S^1 (r)$ is an mm-space.  
	$\{ S^1 (r) \}_{0 < r < \pi^{-1}}$ satisfies the condition U(1). 
	We now prove it. 
	Fix $0 < r < \pi^{-1}$ and $x_r \in S^1 (r)$. 
	We show 
	\[
		\lim_{\theta \to 0} \frac{\log \mu_{S^1 (r)} (B_\theta (x_r))}{ \log \theta} = \sup_{0 < \theta < 1} \frac{\log \mu_{S^1 (r)} (B_\theta (x_r)) } {\log \theta} = 1.
	\] 
	If $0 < \theta \leq \pi r$, the equality $\mu_{S^1 (r)} (B_\theta (x_r)) = \theta / \pi r$ holds. 
	Thus, we have
	\[
		\frac{\log \mu_{S^1 (r)} (B_\theta (x_r))}{\log \theta} = 1 + \frac{\log \frac{1}{\pi r}}{\log \theta}. 
	\]
	If $\pi r < \theta < 1$, we find $\log \mu_{S^1 (r)} (B_\theta (x_r)) / \log \theta = 0$ since the equality $\mu_{S^1 (r)} (B_\theta (x_r)) = 1$ holds . 
	Therefore, we see that $\{ S^1 (r) \}_{0 < r < \pi^{-1}}$ satisfies the condition U(1). 
	
	$\{ S^1 (r)\}_{0 < r < \pi^{-1}}$ measured Gromov--Hausdorff converges to the one-point mm-space $* \coloneqq (\{ \ast \}, d_*, \delta_*)$ as $r \to 0$, where $\delta_*$ is the Dirac measure and $d_*$ is the trivial metric. 
	For $0 < r < \pi^{-1}$, we have $\udimH (\mu_{S_1 (r)}) = \ldimH (\mu_{S_1 (r)}) = \essudimH (\mu_{S_1 (r)}) = \essldimH (\mu_{S_1 (r)}) = 1$ and 
	all four dimensions of $\delta_*$ are equal to $0$. 
\end{ex}

\begin{lem}\label{lem: estimate ball}
	Let $\mathcal{L}^2$ be the two-dimensional Lebesgue measure. 
	For $R > 0$, $0 < r \leq 2R$, and $x \in B_R (0) \subset \mathbb{R}^2$, we have
	\[
		\frac{\pi}{4}r^2 \leq \mathcal{L}^2 (B_R (0) \cap B_r (x)) \leq \pi r^2. 
	\]
\end{lem}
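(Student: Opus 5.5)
The upper bound is immediate, since $B_R(0)\cap B_r(x)\subset B_r(x)$ and $\mathcal{L}^2(B_r(x))=\pi r^2$. All the work is in the lower bound. The plan is to exhibit an explicit ball of radius $r/2$ contained in $B_R(0)\cap B_r(x)$; its area is $\pi r^2/4$, which is exactly the constant required.

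Choose the center on the segment from $x$ toward the origin. Write $|x|\le R$. If $x=0$, take $y=0$. Otherwise, set
\[
	y \coloneqq x - t\,\frac{x}{|x|}, \qquad t \coloneqq \min\{ r/2,\ |x| \},
\]
so that $y$ lies on the segment $[0,x]$ with $|x-y|=t\le r/2$. Containment in $B_r(x)$ then follows from the triangle inequality: for $z\in B_{r/2}(y)$,
\[
	|z-x|\le |z-y|+|y-x|\le r/2+r/2=r.
\]

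Containment in $B_R(0)$ needs the two cases of the minimum, and this case check is the only real obstacle.

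\emph{Case $t=r/2$.} Here $|x|\ge r/2$, and $|y|=|x|-r/2\le R-r/2$. So $|z|\le |y|+r/2\le R$.

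\emph{Case $t=|x|<r/2$.} Here $y=0$, and we need $r/2\le R$. This is precisely the hypothesis $r\le 2R$, so $|z|\le r/2\le R$.

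In both cases $B_{r/2}(y)\subset B_R(0)\cap B_r(x)$. Hence $\mathcal{L}^2(B_R(0)\cap B_r(x))\ge \pi (r/2)^2=\frac{\pi}{4}r^2$, which completes the plan.
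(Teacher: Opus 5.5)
Your proposal is correct and is essentially the paper's proof: the paper also uses a ball of radius $r/2$ centered at $0$ when $\|x\|_2 \le r/2$ (using $r\le 2R$), and centered at $z=(1-r/(2\|x\|_2))\,x$ otherwise. These are exactly your two cases of $t=\min\{r/2,|x|\}$; the only difference is that you write the center as a single formula.
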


\begin{proof}
	It is trivial that the latter inequality holds. 
	If $\|x\|_2 \leq r/ 2$, the first inequality clearly holds because $B_{r/2} (0) \subset B_R (0) \cap B_r (x)$. 
	If $\| x \|_2 > r / 2$, we set 
	\[
		z \coloneqq \left( 1 - \frac{r}{2 \| x \|_2} \right) x. 
	\]
	Fix $y \in B_{r/2} (z)$. Then we have
	\begin{align*}
		& \| y \|_2 \leq \| z \|_2 + \frac{r}{2} \leq R - \frac{r}{2} + \frac{r}{2} = R, \\
		& \| x - y \|_2 \leq \|x - z \|_2 + \| y - z \|_2 \leq \frac{r}{2} + \frac{r}{2} = r, 
	\end{align*}
	where $\| \cdot \|_2$ is the Euclidean norm. 
	Therefore we obtain $B_{r/2} (z) \subset B_R (0) \cap B_r (x)$. 
	This completes the proof. 
\end{proof}

Let $X$ be an mm-space. For $x \in X$, if the limit 
\[
	\lim_{r \to 0} \frac{\log \mu_X (B_r (x))}{\log r}
\]
exists, we write $d_{\mu_X} (x)$ for it. 

\begin{ex} \label{ex: counterexample essldimH}
	The Borel probability measure $\mu$ on $B_{1/10} (0) \subset (\mathbb{R}^2, \| \cdot \|_2) $ is defined by
	\[
		d \mu (x) \coloneqq \left( \frac{8}{\pi} + \frac{23}{5 \pi \|x\|_2} \right) \; d \mathcal{L}^2 (x).
	\]
	We set $B \coloneqq (B_{1/10} (0), \mu)$. 
	
	Let $J_n \coloneqq ([0, 1/ (100 (n+1))], | \cdot |, 100(n+1) \mathcal{L}^1|_{[0, 1/ (100 (n+1))]})$. 
	We identify $0 \in J_n$ with the origin of $B$. 
	Then we obtain the wedge sum $X_n \coloneqq B \vee J_n$. 
	We define the metric $d_{X_n}$ on $X_n$ as follows: for $x,y \in B$, $s, t \in J_n$
	\[
		d_{X_n} (x, t) = d_{X_n} (t, x) \coloneqq \|x\|_2 + t ,
	\] 
	 $d_{X_n} (x,y) = d_{X_n} (y,x) \coloneqq \| x - y \|_2$, and $d_{X_n} (s, t) = d_{X_n} (t, s) = |  s- t |$. 
	 We set
	 \[
	 	\mu_{X_n} \coloneqq \left( 1 - \frac{1}{100 (n + 1)} \right) \mu + \mathcal{L}^1|_{J_n}. 
	 \]
	 
	 We show that $(X_n, d_{X_n}, \mu_{X_n})$ satisfies the condition U($1/20$). 
	  It is easy to check that $d_{\mu_{X_n}} (t) = 1$ for $t \in J_n \setminus \{ 0 \}$. 
	 For $0 < r < 1/(100 (n+1))$, we have
	 \begin{align*}
	 	\mu_{X_n} (B_r (0)) &= \left( 1 - \frac{1}{100 (n+1)} \right)\mu (B_r (0)) + r \\
		&
		= \left( 1 - \frac{1}{100 (n+1)} \right) \left( 8 r^2 + \frac{46}{5} r \right) +r
	 \end{align*}
	 Therefore $d_{\mu_{X_n}} (0) = 1$. 
	For $x \in B \setminus \{0 \}$,  the probability density function of $\mu$ is bounded above and below by positive constants on a sufficiently small closed ball centered at $x$. 
	By Lemma \ref{lem: estimate ball}, we have $d_{\mu_{X_n}} (x) = 2$. 
	
	Next, we show that $\mu_{X_n} (B_r (x)) \geq r^{d_{\mu_{X_n}(x)}}$ for $x \in X_n$ and $0 < r < 1/ 20$. 
	\begin{itemize}
		\item If $x \in J_n \setminus \{ 0 \}$ and $0 < r < 1/ (100 (n+1))$,  we have
		\begin{align*}
			\mu_{X_n} (B_r (x)) &\geq \mathcal{L}^1|_{J_n} (B_r (x)) \\
			&= \min \{ x, r \} + \min \left \{ r, \frac{1}{100 (n+1)} - x \right \} \geq r.
		\end{align*}
		If $1/ (100 (n+1)) \leq r < 1 /20$,  we have
		\begin{align*}
			&\mu_{X_n} (B_r (x)) \\
			&\geq \frac{1}{100 (n+1)} + \left( 1 - \frac{1}{100 (n+1)} \right) \mu (B_{r - 1/(100 (n+1))} (0)) \\
			& \geq \frac{1}{100 (n+1)} + \left( 1 - \frac{1}{100 (n+1)} \right) \frac{46}{5} \left (r - \frac{1}{100 (n+1)} \right) \\
			& \geq \frac{1}{100 (n+1)} + \left (r - \frac{1}{100 (n+1)} \right) = r = r^{d_{\mu_{X_n}(x)}}. 
		\end{align*}
		\item If $x \in B \setminus \{ 0 \}$, by Lemma \ref{lem: estimate ball}, we obtain 
		\[
			\mu_{X_n} (B_r (x)) \geq \left( 1 - \frac{1}{100 (n+1)} \right) \frac{8}{\pi} \frac{\pi}{4} r^2 \geq r^2 = r^{d_{\mu_{X_n}(x)}}. 
		\]
		\item If $x = 0$, we have
		\begin{align*}
			\mu_{X_n (B_r (0))} & \geq \left( 1 - \frac{1}{100 (n+1)} \right) \mu (B_r (0)) \\
			& \geq \left( 1 - \frac{1}{100 (n+1)} \right) \frac{46}{5} r \geq r = r^{d_{\mu_{X_n} (0)}}. 
		\end{align*}
	\end{itemize}
	Thus we conclude that $X_n$ satisfies the condition U($1/20$). 
	 
	 We see that $\{ X_n\}_{n \in \mathbb{N}}$ measured Gromov--Hausdorff converges to $B$. 
	 By the same argument as above, we have $d_\mu (0) = 1$ and $d_\mu (x) = 2$ for $x \in B \setminus \{ 0 \}$. 
	 Therefore we obtain 
	 \[
	 	2 = \essldimH (\mu) > \liminf_{n \to \infty} \essldimH (\mu_{X_n}) = 1. 
	 \]
\end{ex}

Let $p, q > 0$ with $p + q = 1$, and let $X$ and $Y$ be mm-spaces such that $\diam (X)$ and $\diam (Y)$ is finite. 
For $r > \max \{\diam (X), \diam (Y) \}$, we set
\[
	X^p +_r Y^q \coloneqq (X \sqcup Y, d_{X^p+_rY^q}, p \mu_X + q \mu_Y), 
\] 
where, for $z, w \in X \sqcup Y$, we define $d_{X^p+_rY^q} (z, w)$ by 
\[
	d_{X^p+_rY^q} (z, w) \coloneqq 
		\begin{cases*}
			d_X (z, w) & if $z, w \in X$ \\
			r & if $z \in X, w \in Y$ or $w \in X, z \in Y$\\
			d_Y (z, w) & if $z, w \in Y$
		\end{cases*}
		. 
\]
We see that $X^p +_r Y^q$ is an mm-space. 

\begin{ex} \label{ex: not closed under UD}
	For $0 < r < (2\pi)^{-1}$, we set $X_r \coloneqq S^1 (r)^{1/2} +_1 S^1 (r)^{1/2}$. 
	Fix $x \in X_r$. 
	If $0 < \theta \leq \pi r$, we have $\mu_{X_r} (B_\theta (x)) = \theta /2 \pi r$. 
	Thus, we obtain 
	\[
		\frac{\log \mu_{X_r} (B_\theta (x))}{\log \theta} = 1 + \frac{\log \frac{1}{2 \pi r}}{\log \theta}. 
	\]
	If $\pi r < \theta < 2^{-1}$, the equality $\mu_{X_r} (B_\theta (x)) = 1/ 2$ implies $\log 2^{-1}/ {\log \theta} \leq 1$. 
	Hence, we find
	\[
		\lim_{\theta \to 0} \frac{\log \mu_{X_r} (B_\theta (x))}{\log \theta} = \sup_{0 < \theta < 1/2} \frac{\log \mu_{X_r} (B_\theta (x))}{\log \theta} = 1. 
	\]
	Therefore, we see that $X_r$ satisfies the condition U($1/2$). 
	
	$\{ X_r \}_{0 < r < (2 \pi)^{-1}}$ measured Gromov--Hausdorff converges to $X \coloneqq *^{1/2} +_1 *^{1/2} $. 
	In particular, Proposition \ref{prop: mgh and box} implies that it box converges to $X$. 
	However, $X$ does not satisfy the condition U($1/2$). 
	Indeed, for $x \in X$ and $0 < \theta < 1/2$, we have $\mu_X (B_\theta (x)) = 1/ 2$. 
	Therefore, we obtain
	\[
		0 = \lim_{\theta \to 0} \frac{\log \mu_X (B_\theta (x))}{\log \theta} = \inf_{0 < \theta < 1/ 2}\frac{\log \mu_X (B_\theta (x))}{\log \theta}
		\neq \sup_{0 < \theta < 1/ 2}\frac{\log \mu_X (B_\theta (x))}{\log \theta}
	\]
	This implies that $\mathcal{X}_{\mathrm{U}(D)} \subset \mathcal{X}$ is not necessarily a closed set with respect to the box topology. 
\end{ex}

\subsection{Condition L($D$)}

\begin{ex} \label{ex: not closed under LD}
	For $n \in \mathbb{N}$, we set $X_n \coloneqq \left(\left \{ i/ n \right\}_{i = 1}^n, | \cdot |, n^{-1} \sum_{i = 1}^n \delta_{i/n} \right)$. 
	 $X_n$ is an mm-space. 
	For $i = 1, \ldots, n$, it is obvious that 
	\[
		\lim_{r \to 0} \frac{\log \mu_{X_n} (B_r (i/n))}{\log r} = \inf_{0 < r < 1/2} \frac{\log \mu_{X_n} (B_r (i/n))}{\log r} = 0. 
	\]
	Thus, we see that $X_n$ satisfies the condition L(1/2). 
	Moreover, $\udimH (\mu_{X_n}) = \ldimH (\mu_{X_n}) = \essudimH (\mu_{X_n}) = \essldimH (\mu_{X_n}) = 0$ holds. 	
	
	$\{ X_n \}_{n \in \mathbb{N}}$ measured Gromov--Hausdorff converges to $I \coloneqq ([0, 1], |\cdot|, \mathcal{L}^1|_{[0, 1]})$. 
	We also have $\udimH (\mathcal{L}^1|_{[0, 1]}) = \ldimH (\mathcal{L}^1|_{[0, 1]}) = \essudimH (\mathcal{L}^1|_{[0, 1]}) = \essldimH (\mathcal{L}^1|_{[0, 1]}) = 1$. 
	Therefore, this is an example of Theorems \ref{thm: uppersemi, box} and \ref{thm: uppersemi, mGH}. 
	This example shows that none of the four dimensions is lower semicontinuous with respect to either the measured Gromov--Hausdorff distance or  the box distance. 
	
	$I$ does not satisfy the condition L(1/2). 
	Indeed, let $x \coloneqq 1/ 2$. 
	For $0 < r < 1/2$, we have $\mathcal{L}^1|_{[0, 1]} (B_r (x)) = 2r$. 
	This implies 
	\begin{align*}
		1 &= \lim_{r \to 0} \frac{\log \mathcal{L}^1|_{[0, 1]} (B_r (x))}{\log r} \\
		&= \sup_{0 < r < 1/2}\frac{\log \mathcal{L}^1|_{[0, 1]} (B_r (x))}{\log r} 
		\neq \inf_{0 < r < 1/2}\frac{\log \mathcal{L}^1|_{[0, 1]} (B_r (x))}{\log r}. 
	\end{align*}
	Thus, $\mathcal{X}_{\mathrm{L}(D)} \subset \mathcal{X}$ is not necessarily a closed set with respect to the box topology. 
\end{ex}

\begin{ex} \label{ex: direct sum of S^1 and * -1}
	For $n \geq 2$, we set $X_n \coloneqq (S^1(1))^{1-n^{-1}} +_{2 \pi} *^{n^{-1}}$. 
	Fix $x_n \in X_n$. 
	If $x_n \in S^1 (1)$, we have 
	\begin{align*}
		\lim_{r \to 0} \frac{\log \mu_{X_n} (B_r (x_n))}{ \log r }
		&=
		\lim_{r \to 0} \left(1 + \frac{\log \pi^{-1}}{\log r} + \frac{\log (1 - n^{-1})}{\log r} \right) \\
		&=
		\inf_{0 < r < 1} \left( 1 + \frac{\log \pi^{-1}}{\log r} + \frac{\log (1 - n^{-1})}{\log r} \right)= 1. 
	\end{align*}
	If $x_n = *$, we obtain 
	\[
		\lim_{r \to 0} \frac{\log \mu_{X_n} (B_r (*))} { \log r} = \inf_{0 < r < 1} \frac{\log n^{-1}} { \log r} = 0.
	\] 
	Therefore, $\{ X_n \}_{n \geq 2}$ satisfies the condition L(1). 
	Moreover, the equality 
	$\ldimH (\mu_{X_n}) = \essldimH (\mu_{X_n})= 0$ holds. 
	
	Letting $n \to \infty$, we see that $\{ X_n \}_{n \geq 2}$ box converges to $S^1 (1)$.  
	We have $\ldimH (\mu_{S^1 (1)}) = \essldimH (\mu_{S^1 (1)})= 1$. 
	Hence, we obtain 
	\begin{align*} 
		0 = \liminf_{n \to \infty} \ldimH (\mu_{X_n}) &< \ldimH (\mu_{S^1 (1)}) = 1, \\
		0 = \liminf_{n \to \infty} \essldimH (\mu_{X_n})&< \essldimH (\mu_{S^1 (1)})= 1. 
	\end{align*}
	Under the condition L($D$), these imply that $\ldimH$ and $\essldimH$ are not necessarily lower semicontinuous with respect to the box topology. 
\end{ex}

\begin{ex} \label{ex: counterexample, essudimH}
	We set $L \coloneqq ([0, 16], | \cdot |, \nu)$, where
	\[
		d \nu (t) \coloneqq \frac{2t}{16^2} \; dt. 
	\]
	Let $B_{1/  (n+2)} (0) \subset \mathbb{R}^2$ and let $B_n \coloneqq (B_{1/  (n+2)} (0), \| \cdot \|_2, \mathcal{L}^2|_{B_{1/  (n+2)} (0)}) $. 
	In the same way as Example \ref{ex: counterexample essldimH}, we identify $0 \in L$ with the origin of $B_n$ and obtain the metric space $X_n \coloneqq (L \vee B_n, d_{X_n})$, where the the restrictions of $d_{X_n}$ to $L$ and $B_n$ agree with $| \cdot |$ and $\| \cdot \|_2$, respectively, and $d_{X_n} (x, t) = d_{X_n} (t, x) = t + \| x \|_2$ for $x \in B_n$ and $t \in L$.
	We define the Borel probability measure $\mu_{X_n}$ on $X_n$ as
	\[
		\mu_{X_n} \coloneqq \left( 1 - \frac{1}{4 (n+2)^2} \right) \nu + \frac{1}{4 \pi} \mathcal{L}^2|_{B_{1/  (n+2)} (0)}. 
	\]
	
	We show that $X_n$ satisfies the condition L($1$). 
	By Lemma \ref{lem: estimate ball}, we have $d_{\mu_{X_n}} (x) = 2$ for $x \in B_n \setminus \{ 0 \}$. 
	Moreover, we see that $d_{\mu_{X_n}} (t) = 1$ for $t \in L \setminus \{ 0 \}$. 
	For $0 < r < (n+2)^{-1}$, we have
	\[
		\mu_{X_n} (B_r (0)) = \left(1 - \frac{1}{4 (n+2)^2} \right)\frac{1}{16^2} r^2 + \frac{1}{4 } r^2. 
	\]
	Thus we see that $d_{\mu_{X_n}} (0) = 2$. 
	To complete the claim, it suffices to show that $\mu_{X_n} (B_r (x)) \leq r^{d_{\mu_{X_n}} (x)}$ for $0 < r < 1$ and $x \in X_n$. 
	Fix $0 < r < 1$. 
	If $x \in B_n$, we set 
	$s \coloneqq \max \{ r - \| x \|_2, 0 \}$. 
	Then we have
	\begin{align*}
		\mu_{X_n} (B_r (x)) &\leq \frac{1}{4}r^2 + \left( 1 - \frac{1}{4 (n+2)^2} \right) \int_0^s \frac{2t}{16^2} dt
		\\
		&\leq \frac{1}{4}r^2 + \frac{r^2}{16^2} \leq r^2 = r^{d_{\mu_{X_n}}(x)}. 
	\end{align*}
	If $t \in L \setminus \{ 0 \}$, we obtain
	\begin{align*}
		\mu_{X_n} (B_r (t)) &\leq \int_{\max \{t-r, 0\} }^{\min \{t+r, 16 \}} \frac{2 s}{16^2} ds + \frac{1}{4} r^2
		 \leq
		\int_{\max \{t-r, 0\} }^{\min \{t+r, 16 \}} \frac{2}{16} ds + \frac{1}{4}r^2 \\
		& \leq \frac{1}{4} r + \frac{1}{4}r^2 \leq r = r^{d_{\mu_{X_n}}(t)}. 
	\end{align*}

	$\{ X_n \}_{n \in \mathbb{N}}$ measured Gromov--Hausdorff converges to $L$. 
	It is easy to check that $d_\nu (0) = 2$ and $d_\nu (t) = 1$ for $t \in L \setminus \{ 0 \}$. 
	Therefore
	\[
		2 = \limsup_{n \to \infty} \essudimH (\mu_{X_n}) > \essudimH (\nu) = 1. 
	\]
\end{ex}

\begin{ex}
	For $n \geq 2$, we set $X_n \coloneqq (S^1(1))^{n^{-1}} +_{2 \pi} *^{1-n^{-1}}$. 
	In the same way as in Example \ref{ex: direct sum of S^1 and * -1}, we see that $\{ X_n \}_{n \geq 2}$ satisfies the condition L(1) and obtain that
	$\udimH (\mu_{X_n}) = \essudimH (\mu_{X_n}) = 1$ and $\udimH (\delta_*) = \essudimH (\delta_*) = 0$. 
	As $n \to \infty$, we find that $\{ X_n \}_{n \geq 2}$ box converges to $*$. 
	Therefore, under the condition L($D$),  $\udimH$ and $\essudimH$ are not necessarily upper semicontinuous with respect to the box distance. 
\end{ex}

\end{document}